\theoremstyle{plain}
\newtheorem{definition}{Definition}[section]
\newtheorem{thm}[definition]{Theorem}
\newtheorem{lem}[definition]{Lemma}
\newtheorem{cor}[definition]{Corollary}
\newtheorem{rem}[definition]{Remark}
\title{Integer group determinants for abelian groups \\ of order $16$}
\author{Yuka Yamaguchi and Naoya Yamaguchi}
\date{\today}
\begin{document}

\maketitle

\begin{abstract}
For any positive integer $n$, 
let ${\rm C}_{n}$ be the cyclic group of order $n$. 
We determine all possible values of the integer group determinant of ${\rm C}_{4} \times {\rm C}_{2}^{2}$, 
which is the only unsolved abelian group of order $16$. 
\end{abstract}

\section{Introduction}
A circulant determinant is the determinant of a square matrix in which each row is obtained by a cyclic shift of the previous row one step to the right. 
At the meeting of the American Mathematical Society in Hayward, California, in April 1977, 
Olga Taussky-Todd \cite{Olga} suggested a problem that is to determine the all possible integers of an $n \times n$ circulant determinant when the all entries are integers (see e.g., \cite{ https://doi.org/10.48550/arxiv.2205.12439, MR550657}). 
The solution for the case $n = 2$ is well known. 
In the cases of $n = p$ and $2 p$, 
where $p$ is an odd prime, 
the problem was solved \cite{MR624127, MR550657}. 
Also, the problem was solved for the cases $n = 9$ \cite[Theorem~4]{MR601702}, $n = 4$ and $8$ \cite[Theorem~1.1]{MR2914452}, 
$n = 12$ \cite[Theorem~5.3]{MR4056860}, 
$n = 15$ \cite[Theorem~1.3]{MR4363104} and $n = 16$ \cite{Yamaguchi}. 
For a finite group $G$, 
let $x_{g}$ be a variable for each $g \in G$. 
The group determinant of $G$ is defined as $\det{\left( x_{g h^{- 1}} \right)}_{g, h \in G}$. 
Let ${\rm C}_{n}$ be the cyclic group of order $n$. 
Note that the group determinant of ${\rm C}_{n}$ becomes an $n \times n$ circulant determinant. 
The group determinant of $G$ is called an integer group determinant of $G$ when the variables $x_{g}$ are all integers. 
Let $S(G)$ denote the set of all possible values of the integer group determinant of $G$: 
$$
S(G) := \left\{ \det{\left( x_{g h^{- 1}} \right)}_{g, h \in G} \mid x_{g} \in \mathbb{Z} \right\}. 
$$
The problem suggested by Olga Taussky-Todd is extended to the problem that is to determine $S(G)$ for any finite group $G$. 
For some groups, the problem was solved in \cite{MR3879399, MR2914452, MR624127, MR601702, MR4363104, MR3998922, MR4056860, https://doi.org/10.48550/arxiv.2203.14420, Yamaguchi, https://doi.org/10.48550/arxiv.2209.12446, https://doi.org/10.48550/arxiv.2211.01597}. 
As a result, for every group $G$ of order at most $15$, $S(G)$ was determined. 
Also, ${\rm C}_{4} \times {\rm C}_{2}^{2}$ is left as the only unsolved abelian group of order $16$. 

In this paper,  we determine $S \left( {\rm C}_{4} \times {\rm C}_{2}^{2} \right)$. 
For any $r \in \mathbb{Z}$, let 
\begin{align*}
P_{r} &:= \left\{ p \mid p \equiv r \: \: ({\rm mod} \: 8) \: \: \text{is a prime number} \right\}, \\ 
P' &:= \left\{ p \mid p = a^{2} + b^{2} \equiv 1 \: \: ({\rm mod} \: 8) \: \: \text{is a prime number satisfying} \: \: a + b \equiv \pm 3 \: \: ({\rm mod} \: 8) \right\}, \\ 
A &:= \left\{ (8 k - 3) (8 l + 3) \mid k, l \in \mathbb{Z} \right\} \subsetneq \left\{ 8 m - 1 \mid m \in \mathbb{Z} \right\}, \\ 
B &:= \left\{ p (8 m - 1) \mid p \in P', \: m \in \mathbb{Z} \right\} \subsetneq \left\{ 8 m - 1 \mid m \in \mathbb{Z} \right\}. 
\end{align*}

\begin{thm}\label{thm:1.1}
We have 
\begin{align*}
S({\rm C}_{4} \times {\rm C}_{2}^{2}) &= \left\{ 16 m + 1, \: 2^{16} (4 m + 1), \: 2^{16} (8 m + 3), \: 2^{17} p (2 m + 1), \: 2^{18} m \mid m \in \mathbb{Z}, \: p \in P_{5} \right\} \\ 
&\qquad \cup \left\{ 2^{16} m \mid m \in A \cup B \right\}. 
\end{align*}
\end{thm}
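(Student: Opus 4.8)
The plan is to exploit the character factorization of the group determinant of a finite abelian group together with the iterated ``sum/difference'' splitting coming from the two ${\rm C}_{2}$ direct factors. Writing $G = ({\rm C}_{4}\times {\rm C}_{2})\times {\rm C}_{2}$ and applying the identity $D_{H\times {\rm C}_{2}}(x)=D_{H}(a)\,D_{H}(b)$, with $a_{h}=x_{(h,0)}+x_{(h,1)}$ and $b_{h}=x_{(h,0)}-x_{(h,1)}$, twice, I would reduce the determinant of $G$ to a product
\[
D=\prod_{(\beta,\gamma)\in\{0,1\}^{2}} s^{(\beta,\gamma)}\,t^{(\beta,\gamma)}\,q^{(\beta,\gamma)},
\]
where each triple $(s,t,q)$ is the factorization of a ${\rm C}_{4}$ circulant determinant into its row sum $s$, its alternating sum $t$, and the sum of two squares $q=(a_{0}-a_{2})^{2}+(a_{1}-a_{3})^{2}$. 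Equivalently $D=D_{\mathrm{re}}\cdot D_{\mathrm{im}}$, where $D_{\mathrm{re}}=\prod s^{(\beta,\gamma)}t^{(\beta,\gamma)}$ is the integer group determinant of ${\rm C}_{2}^{3}$ in the variables $x_{0,j,l}+x_{2,j,l}$ and $x_{1,j,l}+x_{3,j,l}$, and $D_{\mathrm{im}}=\prod q^{(\beta,\gamma)}=|\Delta|^{2}$ with $\Delta$ the ${\rm C}_{2}^{2}$ group determinant over $\mathbb{Z}[i]$ of the Gaussian variables $w_{j,l}=(x_{0,j,l}-x_{2,j,l})+i(x_{1,j,l}-x_{3,j,l})$. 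The essential rigidity is that the variables feeding $D_{\mathrm{re}}$ and those feeding $D_{\mathrm{im}}$ are forced to share the same parities.

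Next I would carry out the $2$-adic analysis that fixes the coarse shape of the answer. Because every character takes values $\equiv 1$ modulo the prime $(1+i)$ of $\mathbb{Z}[i]$, all sixteen linear factors of $D$ are congruent to $\sum_{g}x_{g}$ modulo $2$; hence either $D$ is odd or $2$ divides every factor. In the odd case a direct congruence computation from the factorization shows $D\equiv 1\pmod{16}$, giving the family $16m+1$. In the even case the shared-parity constraint forces the factor $2$ to occur in a correlated way across $D_{\mathrm{re}}$ and $D_{\mathrm{im}}$, and a careful count shows that the $2$-adic valuation is then at least $16$---strictly larger than the value $2^{8}\cdot 2^{4}=2^{12}$ obtained by treating the two parts independently, so the coupling is indispensable. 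The residue of $D/2^{16}$ modulo $8$ is controlled by the facts that a sum of two squares lies in $\{0,1,2,4,5\}\pmod 8$ and that an odd one is $\equiv 1\pmod 4$; tracking these through the product separates the strata of valuation exactly $16$, exactly $17$, and at least $18$.

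For the odd part I would convert the attainable values into a number-theoretic description. The odd values of $D_{\mathrm{re}}$ are the odd elements of $S({\rm C}_{2}^{3})$, while the odd values of $D_{\mathrm{im}}$ are norms from $\mathbb{Z}[i]$, that is, products of sums of two squares; combining the two modulo $8$ explains each stratum. Residues $\equiv 1\pmod 4$ and $\equiv 3\pmod 8$ are attained freely at valuation $16$ (the families $2^{16}(4m+1)$ and $2^{16}(8m+3)$), whereas a residue $\equiv -1\pmod 8$ is attainable at valuation $16$ only when the odd part is a product of a number $\equiv 5$ and a number $\equiv 3$ modulo $8$, or a prime $p\in P'$ times a number $\equiv -1\pmod 8$; these are exactly the sets $A$ and $B$. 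Raising the valuation to $17$ loosens the constraint to admitting a single prime factor in $P_{5}$, giving $2^{17}p(2m+1)$, and at valuation $18$ every odd part becomes available, giving $2^{18}m$. For sufficiency I would exhibit explicit integer assignments realizing one representative of each family and then close the families under multiplication, using the product structure inherited from the Gaussian norm and the ${\rm C}_{2}^{3}$ determinant together with Fermat's two-squares theorem and the existence of primes in the relevant residue classes modulo $8$.

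The main obstacle is the exact determination of the odd part at the borderline valuation $16$: proving that $2^{16}(8m-1)$ is realizable \emph{if and only if} $8m-1\in A\cup B$, rather than for every $m$. This demands pinning down precisely which residues a product $s\,t\,q$ can represent when its $2$-adic valuation is minimal, and matching the multiplicative constraints of representability as a sum of two squares---the condition $a+b\equiv\pm 3\pmod 8$ defining $P'$, and the splitting behaviour of primes $\equiv 5\pmod 8$---against explicit constructions. The interplay between the shared-parity rigidity of the factorization and the quadratic-residue bookkeeping modulo $8$ is the delicate point, on both the necessity and the sufficiency sides.
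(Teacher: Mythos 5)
Your plan follows essentially the same route as the paper: the iterated ${\rm C}_{2}$-splitting into four ${\rm C}_{4}$ determinants, each factored as a difference of squares times a sum of two squares, the shared-parity coupling among the four factors forcing the even values into $2^{16}\mathbb{Z}$, the mod $8$ bookkeeping that isolates $A$ and $B$ at valuation exactly $16$ and a prime of $P_{5}$ at valuation exactly $17$, and explicit integer assignments for sufficiency. The only caution is that realizing a full family such as $\{16m+1\}$ requires parametrized constructions rather than one representative closed under multiplication (products of representatives do not exhaust a residue class), but this is consistent with your ``explicit integer assignments'' and is exactly what the paper supplies.
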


Let ${\rm D}_{n}$ denote the dihedral group of order $n$ and let 
\begin{align*}
C &:= \left\{ (8 k - 3) (8 l - 3) (8 m - 3) (8 n - 3) \mid k \in \mathbb{Z}, \: 8 l - 3, 8 m - 3, 8 n - 3 \in P_{5}, \right. \\ 
&\qquad \left. k + l \not\equiv m + n \: \: ({\rm mod \: 2}) \right\} \subsetneq \left\{ 16 m - 7 \mid m \in \mathbb{Z} \right\}, \\ 
D &:= \left\{ (8 k - 3) (8 l - 3) \mid k, l \in \mathbb{Z}, \: k \equiv l \: \: ({\rm mod} \: 2) \right\} \subsetneq \left\{ 16 m - 7 \mid m \in \mathbb{Z} \right\}. 
\end{align*}
Remark that $C \subset D$ holds. 
In \cite{https://doi.org/10.48550/arxiv.2209.12446}, \cite{https://doi.org/10.48550/arxiv.2211.01597}, \cite[Theorem~1.5]{https://doi.org/10.48550/arxiv.2203.14420}, \cite[Theorem~5.3]{MR3879399} and \cite{Yamaguchi}, the following are obtained respectively: 
\begin{align*}
S({\rm C}_{2}^{4}) &= \left\{ 16 m + 1, \: 2^{16} (4 m + 1), \: 2^{24} (4 m + 1), \: 2^{24} (8 m + 3), \: 2^{24} m', \: 2^{26} m \mid \right. \\ 
&\qquad \left. m \in \mathbb{Z}, \: m' \in A \right\}, \\ 
S({\rm C}_{4}^{2}) &= \left\{ 16 m + 1, \: m', \: 2^{15} p (2 m + 1), \: 2^{16} m \mid m \in \mathbb{Z}, \: m' \in C, \: p \in P_{5} \right\}, \\ 
S({\rm C}_{8} \times {\rm C}_{2}) &= \left\{ 16 m + 1, \: m', \: 2^{10} (2 m + 1), \: 2^{11} p (2 m + 1), \: 2^{11} q^{2} (2 m + 1), \: 2^{12} m \mid \right. \\ 
&\qquad \left. m \in \mathbb{Z}, \: m' \in D, \: p \in P' \cup P_{5}, \: q \in P_{3}  \right\}, \\ 
S({\rm D}_{16}) &= \left\{ 4 m + 1, \: 2^{10} m \mid m \in \mathbb{Z} \right\}, \\ 
S({\rm C}_{16}) &= \left\{ 2 m + 1, \: 2^{6} p (2 m + 1), \: 2^{6} q^{2} (2 m + 1), \: 2^{7} m \mid m \in \mathbb{Z}, \: p \in P' \cup P_{5}, \: q \in P_{3} \right\}. 
\end{align*}
Pinner and Smyth \cite[p.427]{MR4056860} noted the following inclusion relations for every groups of order $8$: 
$S({\rm C}_{2}^{3}) \subsetneq S({\rm C}_{4} \times {\rm C}_{2}) \subsetneq S({\rm Q}_{8}) \subsetneq S({\rm D}_{8}) \subsetneq S({\rm C}_{8})$, 
where ${\rm Q}_{8}$ denotes the generalized quaternion group of order $8$. 
 From the above results, 
 we have 
 $$
 S({\rm C}_{2}^{4}) \subsetneq S({\rm C}_{4} \times {\rm C}_{2}^{2}) \subsetneq S({\rm C}_{4}^{2}) \subsetneq S({\rm C}_{8} \times {\rm C}_{2}) \subsetneq S({\rm D}_{16}) \subsetneq S({\rm C}_{16}). 
 $$


\section{Preliminaries}
For any $\overline{r} \in {\rm C}_{n}$ with $r \in \{ 0, 1, \ldots, n - 1 \}$, 
we denote the variable $x_{\overline{r}}$ by $x_{r}$, 
and let $D_{n}(x_{0}, x_{1}, \ldots, x_{n - 1}) := \det{\left( x_{g h^{- 1}} \right)}_{g, h \in {\rm C}_{n}}$. 
For any $(\overline{r}, \overline{s}) \in {\rm C}_{4} \times {\rm C}_{2}$ with $r \in \{ 0, 1, 2, 3 \}$ and $s \in \{ 0, 1 \}$, 
we denote the variable $y_{(\overline{r}, \overline{s})}$ by $y_{j}$, where $j := r + 4 s$, 
and let $D_{4 \times 2}(y_{0}, y_{1}, \ldots, y_{7}) := \det{\left( y_{g h^{- 1}} \right)}_{g, h \in {\rm C}_{4} \times {\rm C}_{2}}$. 
For any $(\overline{r}, \overline{s}, \overline{t}) \in {\rm C}_{4} \times {\rm C}_{2}^{2}$ with $r \in \{ 0, 1, 2, 3 \}$ and $s, t \in \{ 0, 1 \}$, 
we denote the variable $z_{(\overline{r}, \overline{s}, \overline{t})}$ by $z_{j}$, where $j := r + 4 s + 8 t$, 
and let $D_{4 \times 2 \times 2}(z_{0}, z_{1}, \ldots, z_{15}) := \det{\left( z_{g h^{- 1}} \right)}_{g, h \in {\rm C}_{4} \times {\rm C}_{2}^{2}}$. 
From the $G = {\rm C}_{4}$ and $H = \{ \overline{0}, \overline{2} \}$ case of \cite[Theorem~1.1]{https://doi.org/10.48550/arxiv.2203.14420}, 
we have the following corollary. 

\begin{cor}\label{cor:2.1}
We have 
\begin{align*}
D_{4}(x_{0}, x_{1}, x_{2}, x_{3}) 
&= D_{2}(x_{0} + x_{2}, x_{1} + x_{3}) D_{2}\left( x_{0} - x_{2}, \sqrt{- 1} (x_{1} - x_{3}) \right) \\ 
&= \left\{ (x_{0} + x_{2})^{2}- (x_{1} + x_{3})^{2} \right\} \left\{ (x_{0} - x_{2})^{2} + (x_{1} - x_{3})^{2} \right\}. 
\end{align*}
\end{cor}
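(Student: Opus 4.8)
The plan is to diagonalize the circulant matrix defining $D_4$ by the discrete Fourier transform over ${\rm C}_4$ and then regroup the resulting eigenvalues into two quadratic factors, each of which is a $D_2$. Since ${\rm C}_4$ is abelian, its group determinant factors as the product of its four character sums; writing $\omega = \sqrt{-1}$ for the primitive fourth root of unity, the eigenvalues are $\lambda_j = x_0 + \omega^{j} x_1 + \omega^{2 j} x_2 + \omega^{3 j} x_3$ for $j \in \{ 0, 1, 2, 3 \}$, and $D_4(x_0, x_1, x_2, x_3) = \lambda_0 \lambda_1 \lambda_2 \lambda_3$. This is the standard circulant-determinant (equivalently, abelian group-determinant) factorization, which I would either quote or re-derive in one line by noting that the characters of ${\rm C}_4$ simultaneously diagonalize the matrix $\left( x_{g h^{-1}} \right)_{g, h \in {\rm C}_4}$.

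Next I would evaluate the four eigenvalues explicitly: $\lambda_0 = (x_0 + x_2) + (x_1 + x_3)$, $\lambda_2 = (x_0 + x_2) - (x_1 + x_3)$, $\lambda_1 = (x_0 - x_2) + \omega (x_1 - x_3)$, and $\lambda_3 = (x_0 - x_2) - \omega (x_1 - x_3) = \overline{\lambda_1}$. The grouping is dictated by the subgroup $H = \{ \overline{0}, \overline{2} \}$: the characters trivial on $H$ produce the pair $\lambda_0, \lambda_2$, while the two characters nontrivial on $H$ produce the conjugate pair $\lambda_1, \lambda_3$.

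Then, using $D_2(a, b) = a^2 - b^2$, I would read off $\lambda_0 \lambda_2 = (x_0 + x_2)^2 - (x_1 + x_3)^2 = D_2(x_0 + x_2, x_1 + x_3)$ and $\lambda_1 \lambda_3 = (x_0 - x_2)^2 + (x_1 - x_3)^2 = (x_0 - x_2)^2 - \bigl( \omega (x_1 - x_3) \bigr)^2 = D_2 \bigl( x_0 - x_2, \sqrt{-1}(x_1 - x_3) \bigr)$. Multiplying the two factors gives the claimed identity. Equivalently, this is exactly the $G = {\rm C}_4$, $H = \{ \overline{0}, \overline{2} \}$ specialization of the cited Theorem~1.1, so one may simply invoke that result instead of re-deriving the factorization.

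Since every step is a direct computation, there is no serious obstacle; the only point requiring care is the appearance of $\sqrt{-1}$ in the second factor. It arises precisely because the characters restricting nontrivially to $H$ twist the corresponding $D_2$ by the imaginary unit, turning a difference of squares into a sum of squares. Keeping track of this twist—rather than the arithmetic itself—is the one place where an error could plausibly creep in.
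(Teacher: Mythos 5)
Your proposal is correct: the character/DFT diagonalization gives exactly the four eigenvalues you list, and grouping them by their restriction to $H=\{\overline{0},\overline{2}\}$ yields the two $D_{2}$ factors, with the $\sqrt{-1}$ twist handled properly. The paper itself simply quotes the $G={\rm C}_{4}$, $H=\{\overline{0},\overline{2}\}$ case of the cited Theorem~1.1 without further argument, so your derivation is the same result obtained by the standard underlying computation rather than by citation.
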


\begin{rem}\label{rem:2.2}
From Corollary~$\ref{cor:2.1}$, we have $D_{4}(x_{0}, x_{1}, x_{2}, x_{3}) = - D_{4}(x_{1}, x_{2}, x_{3}, x_{0})$. 
\end{rem}

From \cite[Theorem~1.1]{https://doi.org/10.48550/arxiv.2202.06952}, 
we have the following corollary. 

\begin{cor}\label{cor:2.3}
The following hold: 
\begin{enumerate}
\item[$(1)$] Let $H = {\rm C}_{4}$ and $K = {\rm C}_{2}$. Then we have 
$$
D_{4 \times 2}(y_{0}, y_{1}, \ldots, y_{7}) = D_{4}(y_{0} + y_{4}, y_{1} + y_{5}, y_{2} + y_{6}, y_{3} + y_{7}) D_{4}(y_{0} - y_{4}, y_{1} - y_{5}, y_{2} - y_{6}, y_{3} - y_{7}); 
$$
\item[$(2)$] Let $H = {\rm C}_{4} \times {\rm C}_{2}$ and $K = {\rm C}_{2}$. Then we have 
$$
D_{4 \times 2 \times 2}(z_{0}, z_{1}, \ldots, z_{15}) = D_{4 \times 2}(z_{0} + z_{8}, z_{1} + z_{9}, \ldots, z_{7} + z_{15}) D_{4 \times 2}(z_{0} - z_{8}, z_{1} - z_{9}, \ldots, z_{7} - z_{15}). 
$$
\end{enumerate}
\end{cor}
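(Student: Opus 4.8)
The plan is to obtain both identities as specializations of the direct-product factorization of \cite{https://doi.org/10.48550/arxiv.2202.06952} to the case $K = {\rm C}_{2}$. For part $(1)$ I take $H = {\rm C}_{4}$ and $K = {\rm C}_{2}$, so that $G = H \times K = {\rm C}_{4} \times {\rm C}_{2}$, and for part $(2)$ I take $H = {\rm C}_{4} \times {\rm C}_{2}$ and the same $K$, so that $G = H \times K = {\rm C}_{4} \times {\rm C}_{2}^{2}$. In both cases the cited theorem expresses $D_{G}$ as a product of two copies of $D_{H}$, one fed by the ``sum'' combination of the variables over $K$ and one by the ``difference'' combination; the only remaining task is to read off these combinations in the paper's linear indexing.

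The mechanism I would use to justify (or re-derive) the cited formula is the idempotent splitting of the group algebra. Writing ${\rm C}_{2} = \{ e, \sigma \}$ with $\sigma^{2} = e$, the element $\sum_{(h, s)} x_{(h, s)} (h, \sigma^{s})$ of $\mathbb{C}[H \times {\rm C}_{2}]$ equals $(\xi_{0} + \xi_{1}) e_{+} + (\xi_{0} - \xi_{1}) e_{-}$, where $\xi_{0} = \sum_{h} x_{(h, 0)} h$ and $\xi_{1} = \sum_{h} x_{(h, 1)} h$ lie in $\mathbb{C}[H]$ and $e_{\pm} = (e \pm \sigma)/2$ are the central idempotents of $\mathbb{C}[{\rm C}_{2}]$. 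Since $\mathbb{C}[H \times {\rm C}_{2}] = \mathbb{C}[H] e_{+} \oplus \mathbb{C}[H] e_{-}$ with each summand isomorphic to $\mathbb{C}[H]$, the group matrix $(x_{g h^{-1}})$ becomes block-diagonal after this change of basis, and its determinant factors as the product of the determinant of multiplication by $\xi_{0} + \xi_{1}$ on $\mathbb{C}[H]$ and that of multiplication by $\xi_{0} - \xi_{1}$ on $\mathbb{C}[H]$. These two determinants are precisely $D_{H}$ evaluated at the sum and at the difference of the two blocks of variables, which is the asserted factorization. Equivalently, since all the groups here are abelian, one may use $D_{G} = \prod_{\chi} \sum_{g} x_{g} \chi(g)$ together with the decomposition $\hat{G} = \hat{H} \times \{ 1, \sgn \}$ of the character group, the trivial character on ${\rm C}_{2}$ producing the sum-factor and the sign character the difference-factor.

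Finally I would match the indexing. In part $(1)$ the convention $j = r + 4 s$ places the variable $y_{(\overline{r}, \overline{0})}$ at position $r$ and $y_{(\overline{r}, \overline{1})}$ at position $r + 4$, so the ${\rm C}_{2}$-sum and ${\rm C}_{2}$-difference of the two blocks are $(y_{r} + y_{r + 4})_{r = 0}^{3}$ and $(y_{r} - y_{r + 4})_{r = 0}^{3}$; feeding these into $D_{4} = D_{{\rm C}_{4}}$ gives the stated product. In part $(2)$ the convention $j = r + 4 s + 8 t$ places the two ${\rm C}_{2}$-cosets over the $t$-coordinate at positions $0, \ldots, 7$ and $8, \ldots, 15$, so the corresponding sum and difference are $(z_{j} + z_{j + 8})_{j = 0}^{7}$ and $(z_{j} - z_{j + 8})_{j = 0}^{7}$, and $D_{4 \times 2} = D_{{\rm C}_{4} \times {\rm C}_{2}}$ of these yields the claim. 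I expect no substantive obstacle: the entire content is the $K = {\rm C}_{2}$ specialization of the cited theorem, and the only point to watch is the bookkeeping, namely that the linear encodings $j = r + 4 s$ and $j = r + 4 s + 8 t$ genuinely split the two ${\rm C}_{2}$-cosets into the first and second halves of the index range, and that the trivial character lands on the sum-factor while the sign character lands on the difference-factor.
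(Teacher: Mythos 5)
Your proposal is correct and follows the same route as the paper, which simply obtains both identities by specializing \cite[Theorem~1.1]{https://doi.org/10.48550/arxiv.2202.06952} to $K = {\rm C}_{2}$ with $H = {\rm C}_{4}$ and $H = {\rm C}_{4} \times {\rm C}_{2}$ respectively; the paper offers no further argument beyond the citation. Your additional re-derivation via the idempotents $e_{\pm} = (e \pm \sigma)/2$ and the indexing check are sound but not needed for the paper's purposes.
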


Throughout this paper, 
we assume that $a_{0}, a_{1}, \ldots, a_{15} \in \mathbb{Z}$, 
and for any $0 \leq i \leq 3$, let 
\begin{align*}
b_{i} &:= (a_{i} + a_{i + 8}) + (a_{i + 4} + a_{i + 12}), & 
c_{i} &:= (a_{i} + a_{i + 8}) - (a_{i + 4} + a_{i + 12}), \\ 
d_{i} &:= (a_{i} - a_{i + 8}) + (a_{i + 4} - a_{i + 12}), & 
e_{i} &:= (a_{i} - a_{i + 8}) - (a_{i + 4} - a_{i + 12}). 
\end{align*}
Also, let $\bm{a} := (a_{0}, a_{1}, \ldots, a_{15})$ and let 
\begin{align*}
\bm{b} := (b_{0}, b_{1}, b_{2}, b_{3}), \quad 
\bm{c} := (c_{0}, c_{1}, c_{2}, c_{3}), \quad 
\bm{d} := (d_{0}, d_{1}, d_{2}, d_{3}), \quad 
\bm{e} := (e_{0}, e_{1}, e_{2}, e_{3}). 
\end{align*}
Then, from Corollary~$\ref{cor:2.3}$, we have 
$$
D_{4 \times 2 \times 2}(\bm{a}) = D_{4}(\bm{b}) D_{4}(\bm{c}) D_{4}(\bm{d}) D_{4}(\bm{e}). 
$$

\begin{rem}\label{rem:2.4}
For any $0 \leq i \leq 3$, the following hold: 
\begin{enumerate}
\item[$(1)$] $b_{i} \equiv c_{i} \equiv d_{i} \equiv e_{i} \pmod{2}$; 
\item[$(2)$] $b_{i} + c_{i} + d_{i} + e_{i} \equiv 0 \pmod{4}$. 
\end{enumerate}
\end{rem}

\begin{lem}\label{lem:2.5}
We have $D_{4 \times 2 \times 2}(\bm{a}) \equiv D_{4}(\bm{b}) \equiv D_{4}(\bm{c}) \equiv D_{4}(\bm{d}) \equiv D_{4}(\bm{e}) \pmod{2}$. 
\end{lem}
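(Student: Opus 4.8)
The plan is to combine the product factorization with the elementary fact that polynomials with integer coefficients respect congruences. The key observation is that Corollary~$\ref{cor:2.1}$ exhibits
$D_{4}(x_{0}, x_{1}, x_{2}, x_{3})$ as an explicit polynomial with integer coefficients in its four arguments. Consequently, if two integer quadruples are componentwise congruent modulo $2$, then their $D_{4}$-values are congruent modulo $2$ as well. By Remark~$\ref{rem:2.4}$(1) we have $b_{i} \equiv c_{i} \equiv d_{i} \equiv e_{i} \pmod{2}$ for each $0 \leq i \leq 3$, so the four vectors $\bm{b}, \bm{c}, \bm{d}, \bm{e}$ agree componentwise modulo $2$. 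Therefore I would first conclude
$$
D_{4}(\bm{b}) \equiv D_{4}(\bm{c}) \equiv D_{4}(\bm{d}) \equiv D_{4}(\bm{e}) \pmod{2},
$$
which already settles the last three congruences in the statement. Write $\varepsilon \in \{0, 1\}$ for this common residue.

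It then remains to check $D_{4 \times 2 \times 2}(\bm{a}) \equiv D_{4}(\bm{b}) \pmod{2}$. For this I would invoke the factorization recorded just before the lemma (coming from Corollary~$\ref{cor:2.3}$), namely $D_{4 \times 2 \times 2}(\bm{a}) = D_{4}(\bm{b}) D_{4}(\bm{c}) D_{4}(\bm{d}) D_{4}(\bm{e})$. Reducing this modulo $2$ and using the common parity from the previous step gives $D_{4 \times 2 \times 2}(\bm{a}) \equiv \varepsilon^{4} \pmod{2}$. Since $\varepsilon \in \{0, 1\}$ satisfies $\varepsilon^{4} = \varepsilon$, we obtain $D_{4 \times 2 \times 2}(\bm{a}) \equiv \varepsilon \equiv D_{4}(\bm{b}) \pmod{2}$, completing the chain of congruences.

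There is essentially no hard step here: the whole argument reduces to the two ingredients already supplied by the excerpt, the integrality of the polynomial $D_{4}$ (Corollary~$\ref{cor:2.1}$) and the product factorization (Corollary~$\ref{cor:2.3}$), together with the triviality $\varepsilon^{4} = \varepsilon$ for $\varepsilon \in \{0,1\}$. If I were being maximally careful, the only point worth spelling out is the first one, that componentwise equality modulo $2$ of the inputs forces equality modulo $2$ of the determinant; this follows because every coefficient of the expanded form of $D_{4}$ is an integer, so reduction modulo $2$ is a ring homomorphism that may be applied before or after substitution.
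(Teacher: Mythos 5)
Your proposal is correct and follows essentially the same route as the paper: the paper also reduces $D_{4}$ modulo $2$ via Corollary~2.1 (noting explicitly that $D_{4}(x_{0},x_{1},x_{2},x_{3})\equiv x_{0}+x_{1}+x_{2}+x_{3}\pmod{2}$), applies Remark~2.4(1) to get the common parity of the four factors, and leaves the step $D_{4\times 2\times 2}(\bm{a})\equiv\varepsilon^{4}\equiv\varepsilon\pmod{2}$ implicit. Your write-up is if anything slightly more complete, since you spell out that last step.
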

\begin{proof}
From Corollary~$\ref{cor:2.1}$, 
for any $x_{0}, x_{1}, x_{2}, x_{3} \in \mathbb{Z}$, 
\begin{align*}
D_{4}(x_{0}, x_{1}, x_{2}, x_{3}) \equiv x_{0} + x_{1} + x_{2} + x_{3} \pmod{2}. 
\end{align*}
Therefore, from Remark~$\ref{rem:2.4}$~(1), we have $D_{4}(\bm{b}) \equiv D_{4}(\bm{c}) \equiv D_{4}(\bm{d}) \equiv D_{4}(\bm{e}) \pmod{2}$. 
\end{proof}

\section{Impossible odd numbers}\label{section3}
In this section, 
we consider impossible odd numbers. 
Let $\mathbb{Z}_{\rm odd}$ be the set of all odd numbers. 

\begin{lem}\label{lem:3.1}
We have $S({\rm C}_{4} \times {\rm C}_{2}^{2}) \cap \mathbb{Z}_{\rm odd} \subset \{ 16 m + 1 \mid m \in \mathbb{Z} \}$. 
\end{lem}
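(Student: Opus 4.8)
The plan is to exploit the factorization $D_{4 \times 2 \times 2}(\bm{a}) = D_{4}(\bm{b}) D_{4}(\bm{c}) D_{4}(\bm{d}) D_{4}(\bm{e})$ together with the explicit formula of Corollary~\ref{cor:2.1}, tracking everything modulo $16$. If the product is odd then each of the four factors is odd, and by Corollary~\ref{cor:2.1} a tuple $(x_0,x_1,x_2,x_3)$ gives an odd $D_4$ exactly when $x_0+x_2$ and $x_1+x_3$ have opposite parity; I call the tuple \emph{type~A} when $x_0+x_2$ is odd and \emph{type~B} otherwise. Writing $\alpha_i:=a_i+a_{i+8}$, $\gamma_i:=a_i-a_{i+8}$, $\beta_i:=a_{i+4}+a_{i+12}$, $\delta_i:=a_{i+4}-a_{i+12}$, so that $b_i=\alpha_i+\beta_i$, $c_i=\alpha_i-\beta_i$, $d_i=\gamma_i+\delta_i$, $e_i=\gamma_i-\delta_i$, I would first check that $\alpha_i\equiv\gamma_i$ and $\beta_i\equiv\delta_i\pmod 2$ force $b_0+b_2\equiv c_0+c_2\equiv d_0+d_2\equiv e_0+e_2\pmod 2$, so all four folds share the same type. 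Since Remark~\ref{rem:2.2} shows the cyclic shift $(x_0,x_1,x_2,x_3)\mapsto(x_1,x_2,x_3,x_0)$ negates $D_4$ and interchanges the two types, and since shifting $\bm a$ in the ${\rm C}_4$-coordinate shifts $\bm b,\bm c,\bm d,\bm e$ simultaneously while leaving $D_{4\times 2\times 2}(\bm a)$ unchanged (four sign changes), I may assume all four folds are of type~A.

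The key computation is to establish, for a type-A tuple, the congruence
\[
D_4(x_0,x_1,x_2,x_3)\equiv (x_0^2-x_2^2)^2-4x_1x_3\pmod{16},
\]
obtained by expanding $\{(x_0+x_2)^2-(x_1+x_3)^2\}\{(x_0-x_2)^2+(x_1-x_3)^2\}$ and discarding the multiples of $16$ (the purely even product drops out and the mixed odd$\cdot$even squares collapse). In particular each type-A factor is $\equiv 1\pmod 4$, so writing $D_4(\bm x)=1+4n(\bm x)$ yields
\[
D_{4\times 2\times 2}(\bm a)=\prod_{\bm x}\bigl(1+4n(\bm x)\bigr)\equiv 1+4\sum_{\bm x}n(\bm x)\pmod{16},
\]
and the whole statement reduces to proving $\sum_{\bm x}n(\bm x)\equiv 0\pmod 4$, the sum running over $\bm x\in\{\bm b,\bm c,\bm d,\bm e\}$.

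For the final cancellation I would reduce $n(\bm x)$ modulo $4$. Since $w:=x_0^2-x_2^2$ is odd for a type-A tuple, one checks $(w^2-1)/4\equiv x_0x_2\pmod 4$, whence $n(\bm x)\equiv x_0x_2-x_1x_3\pmod 4$. Summing over the four folds and using $b_0b_2+c_0c_2=2(\alpha_0\alpha_2+\beta_0\beta_2)$, $d_0d_2+e_0e_2=2(\gamma_0\gamma_2+\delta_0\delta_2)$ together with the analogous identities for the index pair $1,3$, the two contributions become $2(\alpha_0\alpha_2+\beta_0\beta_2+\gamma_0\gamma_2+\delta_0\delta_2)$ and $2(\alpha_1\alpha_3+\beta_1\beta_3+\gamma_1\gamma_3+\delta_1\delta_3)$. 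Because $\alpha_i\equiv\gamma_i$ and $\beta_i\equiv\delta_i\pmod 2$, each parenthesis is even, so both sums vanish modulo $4$; hence $\sum_{\bm x}n(\bm x)\equiv 0\pmod 4$ and $D_{4\times 2\times 2}(\bm a)\equiv 1\pmod{16}$.

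I expect the main obstacle to be psychological as much as computational: at first glance the product looks uncontrollable modulo $16$, because an individual odd factor $D_4(\bm x)$ can be $\equiv 3\pmod 4$ (the type-B case) and a sum of two squares is only pinned down modulo $4$. The two ideas that rescue the argument are, first, the observation that the type is a \emph{global} invariant of $\bm a$, so a single cyclic shift normalizes all four folds to type~A at once; and second, the clean mod-$16$ formula for $D_4$ on type-A tuples, which turns the multiplicative condition into the single linear congruence $\sum_{\bm x}n(\bm x)\equiv 0\pmod 4$ that the relations $\alpha_i\equiv\gamma_i,\ \beta_i\equiv\delta_i\pmod 2$ immediately kill. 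Verifying the mod-$16$ expansion and the identity $(w^2-1)/4\equiv x_0x_2\pmod 4$ carefully is where the real care is needed.
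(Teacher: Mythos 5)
Your argument is correct, and it reaches the conclusion by the same overall route as the paper: factor $D_{4\times 2\times 2}(\bm{a})$ into four $D_{4}$'s, use the common parity pattern of $\bm{b},\bm{c},\bm{d},\bm{e}$ (Remark~\ref{rem:2.4}~(1)) together with a cyclic shift (Remark~\ref{rem:2.2}) to normalize, compute each odd factor modulo $16$, and let a parity cancellation kill the error term in the product $\prod(1+4n(\bm{x}))\equiv 1+4\sum n(\bm{x})$. The differences lie in the two computational inputs. Where you derive the single uniform congruence $D_{4}(\bm{x})\equiv (x_{0}^{2}-x_{2}^{2})^{2}-4x_{1}x_{3}\pmod{16}$, valid for every tuple with $x_{0}+x_{2}$ odd and $x_{1}+x_{3}$ even (I checked the expansion: $v^{2}z^{2}\equiv 0$, $u^{2}z^{2}\equiv z^{2}$, $v^{2}w^{2}\equiv v^{2}\pmod{16}$, and $z^{2}-v^{2}=-4x_{1}x_{3}$, so the formula is right), the paper instead splits into the two residual parity classes $(1,0,0,0)$ and $(0,1,1,1)$ and quotes the case-specific formulas $D_{4}\equiv 8m+1$ and $D_{4}\equiv 8(k+l+n)-3\pmod{16}$ from the imported Lemma~\ref{lem:3.2}; your formula subsumes both cases and keeps the proof self-contained. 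And where you verify $\sum_{\bm{x}}(x_{0}x_{2}-x_{1}x_{3})\equiv 0\pmod 4$ via the bilinear identities $b_{0}b_{2}+c_{0}c_{2}=2(\alpha_{0}\alpha_{2}+\beta_{0}\beta_{2})$, $d_{0}d_{2}+e_{0}e_{2}=2(\gamma_{0}\gamma_{2}+\delta_{0}\delta_{2})$ together with $\alpha_{i}\equiv\gamma_{i}$, $\beta_{i}\equiv\delta_{i}\pmod 2$, the paper gets its cancellation from the linear congruence $b_{i}+c_{i}+d_{i}+e_{i}\equiv 0\pmod 4$ (Remark~\ref{rem:2.4}~(2)). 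Both mechanisms are sound; yours costs a little more algebra per factor --- the identity $(w^{2}-1)/4\equiv x_{0}x_{2}\pmod 4$ for $w=x_{0}^{2}-x_{2}^{2}$ rests on $x_{0}^{4}\equiv 1\pmod{16}$ for odd $x_{0}$ and $2k^{2}\equiv 2k\pmod 4$, both of which hold --- but it avoids the case split entirely and does not rely on the external lemma.
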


To prove Lemma~$\ref{lem:3.1}$, we use the following lemma. 

\begin{lem}[{\cite[Lemmas~4.6 and 4.7]{https://doi.org/10.48550/arxiv.2203.14420}}]\label{lem:3.2}
For any $k, l, m, n \in \mathbb{Z}$, the following hold: 
\begin{enumerate}
\item[$(1)$] $D_{4}(2 k + 1, 2 l, 2 m, 2 n) \equiv 8 m + 1 \pmod{16}$; 
\item[$(2)$] $D_{4}(2 k, 2 l + 1, 2 m + 1, 2 n + 1) \equiv 8 (k + l + n) - 3 \pmod{16}$. 
\end{enumerate}
\end{lem}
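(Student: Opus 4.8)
The plan is to prove both congruences directly from the factored form of $D_{4}$ supplied by Corollary~\ref{cor:2.1}, namely
$$
D_{4}(x_{0}, x_{1}, x_{2}, x_{3}) = \left\{ (x_{0} + x_{2})^{2} - (x_{1} + x_{3})^{2} \right\} \left\{ (x_{0} - x_{2})^{2} + (x_{1} - x_{3})^{2} \right\},
$$
and then to reduce each factor modulo a small power of $2$. For part~$(1)$ I would substitute $(x_{0}, x_{1}, x_{2}, x_{3}) = (2k + 1, 2l, 2m, 2n)$ and observe that $x_{0} \pm x_{2}$ are both odd while $x_{1} \pm x_{3}$ are both even; hence each bracketed factor is an odd square combined with an even square, so each has the shape $4A + 1$ and $4B + 1$ for explicit integers $A, B$ obtained by expanding the squares. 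The same substitution structure works for part~$(2)$ with $(x_{0}, x_{1}, x_{2}, x_{3}) = (2k, 2l + 1, 2m + 1, 2n + 1)$: again $x_{0} \pm x_{2}$ are odd and $x_{1} \pm x_{3}$ are even, so both factors once more reduce to $4A + 1$ and $4B + 1$.

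The key algebraic step is that $(4A + 1)(4B + 1) = 16 A B + 4(A + B) + 1 \equiv 4(A + B) + 1 \pmod{16}$, so everything reduces to computing $A + B$ modulo $4$. For this I would use two elementary facts repeatedly: any product of consecutive integers $t(t \pm 1)$ is even, and $2 t^{2} \equiv 2 t \pmod{4}$ (equivalently $-2 t \equiv 2 t \pmod{4}$). Writing $A + B$ in terms of $k, l, m, n$, the quadratic pieces collapse through these identities and the cross term $4 l n$ vanishes modulo $4$. In part~$(1)$ this leaves $A + B \equiv 2 m \pmod{4}$, whence $D_{4} \equiv 8 m + 1 \pmod{16}$; in part~$(2)$ the analogous reduction leaves $A + B \equiv 2(k + l + n) - 1 \pmod{4}$, whence $D_{4} \equiv 8(k + l + n) - 3 \pmod{16}$.

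The main obstacle is purely the bookkeeping in this last reduction: one must track the parities and the exact residues modulo $4$ of the square and mixed terms while being careful with signs, since in part~$(2)$ the second factor involves $\left( 2(k - m) - 1 \right)^{2}$ and the sum $x_{1} + x_{3}$ carries an extra $+2$ that shifts the computation. None of this is deep, but it must be executed without a sign slip in order to land on the stated residues rather than being off by a multiple of $8$. Once the substitutions are made, the identity $2 t^{2} \equiv 2 t \pmod{4}$ and the evenness of $t(t \pm 1)$ do all the genuine work, so the argument is short.
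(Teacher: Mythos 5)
Your proposal is correct, and it is worth noting that this paper never proves Lemma~3.2 at all: the statement is imported wholesale by citation from Lemmas~4.6 and 4.7 of the referenced arXiv paper, so your blind computation is a genuine self-contained replacement rather than a rederivation of an argument printed here. I checked your reduction and it closes. For part~(1) the two factors are $4A+1$ and $4B+1$ with $A = (k+m)^{2} + (k+m) - (l+n)^{2}$ and $B = (k-m)^{2} + (k-m) + (l-n)^{2}$, so $A + B = 2k^{2} + 2k + 2m^{2} - 4ln \equiv 2m \pmod{4}$ by exactly the two facts you invoke ($2k^{2}+2k = 2k(k+1) \equiv 0$ and $2m^{2} \equiv 2m \pmod{4}$), giving $D_{4} \equiv 8m+1 \pmod{16}$. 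For part~(2), with $x_{1}+x_{3} = 2(l+n+1)$ carrying the shift you flagged and the second factor built from $\left(2(k-m)-1\right)^{2}$, one gets $A = (k+m)^{2} + (k+m) - (l+n+1)^{2}$ and $B = (k-m)^{2} - (k-m) + (l-n)^{2}$, whence $A + B = 2k^{2} + 2m^{2} + 2m - 4ln - 2l - 2n - 1 \equiv 2(k+l+n) - 1 \pmod{4}$ and $D_{4} \equiv 8(k+l+n) - 3 \pmod{16}$, as stated. The only substantive ingredient you rely on, the factorization of $D_{4}$, is Corollary~2.1 of this paper, so your argument uses nothing beyond what is already available here; the cited source proves the lemma by a computation of the same elementary character, so nothing is lost by inlining yours.
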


\begin{proof}[Proof of Lemma~$\ref{lem:3.1}$]
Let $D_{4 \times 2 \times 2}(\bm{a}) = D_{4}(\bm{b}) D_{4}(\bm{c}) D_{4}(\bm{d}) D_{4}(\bm{e}) \in \mathbb{Z}_{\rm odd}$. 
Then $D_{4}(\bm{b}) \in \mathbb{Z}_{\rm odd}$. 
From this and Corollary~$\ref{cor:2.1}$, we have $b_{0} + b_{2} \not\equiv b_{1} + b_{3} \pmod{2}$. 
Therefore, either one of the following cases holds: 
\begin{enumerate}
\item[(i)] exactly three of $b_{0}, b_{1}, b_{2}, b_{3}$ are even; 
\item[(ii)] exactly one of $b_{0}, b_{1}, b_{2}, b_{3}$ is even. 
\end{enumerate}
First, we consider the case~(i). 
From Remarks~$\ref{rem:2.2}$ and $\ref{rem:2.4}$~(1), 
we may assume without loss of generality that $\bm{b} \equiv \bm{c} \equiv \bm{d} \equiv \bm{e} \equiv (1, 0, 0, 0) \pmod{2}$. 
From Remark~$\ref{rem:2.4}$, there exist $m_{i} \in \mathbb{Z}$ satisfying $b_{2} = 2 m_{0}$, $c_{2} = 2 m_{1}$, $d_{2} = 2 m_{2}$, $e_{2} = 2 m_{3}$ 
and $\sum_{i = 0}^{3} m_{i} \equiv 0 \pmod{2}$. Therefore, from Lemma~$\ref{lem:3.2}$~(1), 
\begin{align*}
D_{4 \times 2 \times 2}(\bm{a}) \equiv \prod_{i = 0}^{3} (8 m_{i} + 1) \equiv 8 \sum_{i = 0}^{3} m_{i} + 1 \equiv 1 \pmod{16}. 
\end{align*}
Next, we consider the case~(ii). 
From Remarks~$\ref{rem:2.2}$ and $\ref{rem:2.4}$~(1), 
we may assume without loss of generality that $\bm{b} \equiv \bm{c} \equiv \bm{d} \equiv \bm{e} \equiv (0, 1, 1, 1) \pmod{2}$. 
From Remark~$\ref{rem:2.4}$, 
there exist $k_{i}, l_{i}, n_{i} \in \mathbb{Z}$ satisfying 
\begin{align*}
(b_{0}, b_{1}, b_{2}) &= (2 k_{0}, 2 l_{0} + 1, 2 n_{0} + 1), &
(c_{0}, c_{1}, c_{3}) &= (2 k_{1}, 2 l_{1} + 1, 2 n_{1} + 1), \\ 
(d_{0}, d_{1}, d_{3}) &= (2 k_{2}, 2 l_{2} + 1, 2 n_{2} + 1), &
(e_{0}, e_{1}, e_{3}) &= (2 k_{3}, 2 l_{3} + 1, 2 n_{3} + 1) 
\end{align*}
and $\sum_{i = 0}^{3} k_{i} \equiv \sum_{i = 0}^{3} l_{i} \equiv \sum_{i = 0}^{3} n_{i} \equiv 0 \pmod{2}$. 
Therefore, from Lemma~$\ref{lem:3.2}$~(2), we have 
\begin{align*}
D_{4 \times 2 \times 2}(\bm{a}) \equiv \prod_{i = 0}^{3} \left\{ 8 (k_{i} + l_{i} + n_{i}) - 3 \right\} \equiv 8 \sum_{i = 0}^{3} (k_{i} + l_{i} + n_{i}) + 1 \equiv 1 \pmod{16}. 
\end{align*}
\end{proof}

\section{Impossible even numbers}\label{Section4}

In this section, 
we consider impossible even numbers. 
Let 
\begin{align*}
P &:= \left\{ p \mid p \equiv - 3 \: \: ({\rm mod} \: 8) \: \: \text{is a prime number} \right\}, \\ 
P' &:= \left\{ p \mid p = a^{2} + b^{2} \equiv 1 \: \: ({\rm mod} \: 8) \: \: \text{is a prime number satisfying} \: \: a + b \equiv \pm 3 \: \: ({\rm mod} \: 8) \right\}, \\ 
A &:= \left\{ (8 k - 3) (8 l + 3) \mid k, l \in \mathbb{Z} \right\}, \\ 
B &:= \left\{ p (8 m - 1) \mid p \in P', \: m \in \mathbb{Z} \right\}. 
\end{align*}

\begin{lem}\label{lem:4.1}
We have $S({\rm C}_{4} \times {\rm C}_{2}^{2}) \cap 2 \mathbb{Z} \subset 2^{16} \mathbb{Z}$. 
\end{lem}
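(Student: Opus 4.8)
The plan is to exploit the factorization $D_{4 \times 2 \times 2}(\bm{a}) = D_4(\bm{b}) D_4(\bm{c}) D_4(\bm{d}) D_4(\bm{e})$ established just before Remark~\ref{rem:2.4}, together with Lemma~\ref{lem:2.5}. Suppose $D_{4 \times 2 \times 2}(\bm{a})$ is even. Then by Lemma~\ref{lem:2.5} each of the four factors $D_4(\bm{b}), D_4(\bm{c}), D_4(\bm{d}), D_4(\bm{e})$ is congruent to $D_{4 \times 2 \times 2}(\bm{a})$ modulo $2$, hence every one of them is even. Consequently it suffices to prove the following auxiliary fact: for any integers $x_0, x_1, x_2, x_3$, if $D_4(x_0, x_1, x_2, x_3)$ is even then it is divisible by $2^4$. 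Granting this, each of the four even factors is divisible by $2^4$, so their product is divisible by $(2^4)^4 = 2^{16}$, which is exactly the assertion of the lemma.

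To prove the auxiliary fact I would work directly from Corollary~\ref{cor:2.1}, writing $D_4(\bm{x}) = F_1 F_2$ with $F_1 = (x_0 + x_2)^2 - (x_1 + x_3)^2$ and $F_2 = (x_0 - x_2)^2 + (x_1 - x_3)^2$. As in the proof of Lemma~\ref{lem:2.5}, $D_4(\bm{x})$ is even precisely when $x_0 + x_1 + x_2 + x_3$ is even, and in that case $x_0 + x_2$ and $x_1 + x_3$ have the same parity; note also that $x_0 - x_2$ (resp.\ $x_1 - x_3$) has the same parity as $x_0 + x_2$ (resp.\ $x_1 + x_3$). This splits the argument into two cases. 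If $x_0 + x_2$ and $x_1 + x_3$ are both even, then $x_0 - x_2$ and $x_1 - x_3$ are even too, so each of $F_1, F_2$ is a combination of squares of even integers and is therefore divisible by $4$; hence $2^4 \mid F_1 F_2$. If instead $x_0 + x_2$ and $x_1 + x_3$ are both odd, then $x_0 - x_2$ and $x_1 - x_3$ are odd as well, and using that the square of an odd integer is $\equiv 1 \pmod 8$ one obtains $F_1 \equiv 0 \pmod 8$ (a difference of two odd squares) and $F_2 \equiv 2 \pmod 8$ (a sum of two odd squares); thus $8 \mid F_1$ and $2 \mid F_2$, giving again $2^4 \mid F_1 F_2$.

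Combining the two steps closes the argument. The one point requiring care is the odd-odd case of the auxiliary fact: here it is essential to observe that a difference of two odd squares already carries a factor $2^3$ rather than merely $2^2$, which compensates for the sum of two odd squares contributing only a single factor $2$. Without this sharper valuation one would get only $2^3 \mid D_4(\bm{x})$ per factor and the final bound would fall short of $2^{16}$. Beyond tracking these $2$-adic valuations I do not anticipate any genuine obstacle.
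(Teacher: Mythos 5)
Your argument is correct and follows the paper's proof exactly in structure: factor $D_{4\times 2\times 2}(\bm{a})$ into the four $D_4$ values, use Lemma~\ref{lem:2.5} to see that all four are even, and conclude that each contributes a factor of $2^4$. The only difference is that the paper obtains the per-factor divisibility by citing Kaiblinger's result $S({\rm C}_4)=\mathbb{Z}_{\rm odd}\cup 2^4\mathbb{Z}$, whereas you prove the needed inclusion directly from Corollary~\ref{cor:2.1} by a (correct) $2$-adic valuation computation in the two parity cases.
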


\begin{lem}\label{lem:4.2}
We have 
$$
S({\rm C}_{4} \times {\rm C}_{2}^{2}) \cap 2^{16} \mathbb{Z}_{\rm odd} \subset \left\{ 2^{16} (4 m + 1), \: 2^{16} (8 m + 3), \: 2^{16} m' \mid m \in \mathbb{Z}, \: m' \in A \cup B \right\}. 
$$
\end{lem}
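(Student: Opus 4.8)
The plan is to push the assumption $D_{4\times2\times2}(\bm a)=2^{16}r$ with $r$ odd through the factorization $D_{4\times2\times2}(\bm a)=D_4(\bm b)D_4(\bm c)D_4(\bm d)D_4(\bm e)$ of Corollary~\ref{cor:2.3}, and then read off the possible odd parts $r$. First I would record the elementary fact that whenever $D_4(x_0,x_1,x_2,x_3)$ is even it is already divisible by $2^4$. Using the factorization $D_4=\{(x_0+x_2)^2-(x_1+x_3)^2\}\{(x_0-x_2)^2+(x_1-x_3)^2\}$ of Corollary~\ref{cor:2.1}, one splits into cases according to the (necessarily even) number of odd entries and checks that the two bracketed factors contribute $2$-adic valuation at least $4$ in total. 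Since $v_2(D_{4\times2\times2}(\bm a))=16$ while by Lemma~\ref{lem:2.5} all four factors $D_4(\bm b),\dots,D_4(\bm e)$ are even, each of the four valuations is at least $4$ and they sum to $16$; hence every factor equals exactly $2^4$ times an odd number, and $r$ is the product of those four odd parts.

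Next I would cut down the parity patterns to be examined. By Remark~\ref{rem:2.4}~(1) the vectors $\bm b,\bm c,\bm d,\bm e$ all carry the same parity pattern, and Remark~\ref{rem:2.2} shows $D_4$ is invariant up to sign under the cyclic relabeling $(x_0,x_1,x_2,x_3)\mapsto(x_1,x_2,x_3,x_0)$; since sign is irrelevant to $r$, I may normalize this common pattern to one representative of each cyclic orbit, namely the all-even pattern, the opposite-pair pattern $(1,0,1,0)$, the all-odd pattern, and the adjacent-pair pattern $(1,1,0,0)$. For each representative I would compute the odd part of $D_4$ modulo $8$ (and, where needed, modulo $16$), keeping the difference factor $(x_0+x_2)^2-(x_1+x_3)^2$ and the sum factor $(x_0-x_2)^2+(x_1-x_3)^2$ separate. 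The difference factor is a difference of two squares and so contributes odd residues governed by the input sums modulo $8$, producing factors $\equiv 3$ or $5\pmod 8$; the sum factor is a sum of two squares, so its odd part has only primes $\equiv 1\pmod 4$ up to square factors, and tracking a prime $p\equiv 1\pmod 8$ through a representation $p=a^2+b^2$ yields precisely the constraint $a+b\equiv\pm 3\pmod 8$ defining $P'$.

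Finally I would assemble the residue of $r=\prod(\text{odd parts})$ modulo $8$. If $r\equiv 1,3,5\pmod 8$ there is nothing to prove, since every such odd number already lies in $\{4m+1\}\cup\{8m+3\}$; the whole content is the case $r\equiv 7\pmod 8$, where I must show $r\in A\cup B$. Combining the factorwise residue information, a product $\equiv 7\pmod 8$ can arise only as a $3\pmod 8$ part times a $5\pmod 8$ part, placing $r$ in $A=\{(8k-3)(8l+3)\}$, or with a genuine prime factor $\equiv 1\pmod 8$ forced to come from a sum-of-two-squares factor and hence to satisfy the $P'$ condition, placing $r$ in $B=\{p(8m-1):p\in P'\}$. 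I expect this final step --- proving that the $7\pmod 8$ values are exhausted by $A\cup B$ and that no other mechanism can produce a $7\pmod 8$ residue --- to be the main obstacle, since it is exactly where the combinatorics of the four parity-normalized factors has to be reconciled with the sum-of-two-squares number theory behind $P'$.
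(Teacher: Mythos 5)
Your overall skeleton --- each $D_{4}$ factor must be exactly $2^{4}$ times an odd number, the four parity patterns coincide and can be normalized cyclically, and the only nontrivial residue class is $r\equiv 7\pmod 8$ --- matches the paper's proof (Lemmas~\ref{lem:4.8} and \ref{lem:4.9}). But the step you yourself flag as ``the main obstacle'' is where the whole proof lives, and your sketch of it has a genuine gap: it is \emph{not} true that a prime $p\equiv 1\pmod 8$ dividing a sum factor $(x_{0}-x_{2})^{2}+(x_{1}-x_{3})^{2}$ automatically satisfies the $P'$ condition. For example $113=7^{2}+8^{2}\equiv 1\pmod 8$ has $a+b\equiv\pm 1\pmod 8$, so $113\notin P'$, and yet $2\cdot 113=15^{2}+1^{2}$ arises as such a sum factor. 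The $P'$ conclusion is only forced (Lemma~\ref{lem:4.7}, which rests on \cite[Lemma~4.8]{Yamaguchi}) when \emph{both} $x_{0}-x_{2}$ and $x_{1}-x_{3}$ are $\equiv\pm 3\pmod 8$, and no factor-by-factor residue computation guarantees that one of the four factors is in that configuration. (A smaller inaccuracy: the odd part of the difference factor can be $\equiv\pm 1\pmod 8$, not only $3$ or $5$; cf.\ Lemma~\ref{lem:4.6}.)

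Concretely, treating $\bm{b},\bm{c},\bm{d},\bm{e}$ as four independent vectors of the common parity pattern cannot close the case $r\equiv 7\pmod 8$: one factor equal to $D_{4}(4,2,5,3)=2^{4}\cdot 7$ together with three factors equal to $D_{4}(2,0,1,1)=2^{4}$ would give $2^{16}\cdot 7$, and $7\notin A\cup B$. What excludes this combination is that the four vectors all come from a single $\bm{a}$ and therefore satisfy $b_{i}+c_{i}+d_{i}+e_{i}\equiv 0\pmod 4$ (Remark~\ref{rem:2.4}~(2)); here $b_{0}+c_{0}+d_{0}+e_{0}=10\not\equiv 0\pmod 4$. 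This mod-$4$ linkage is the ingredient missing from your plan: the paper combines it with Lemmas~\ref{lem:4.4}~(4), \ref{lem:4.5} and \ref{lem:4.6} to show that at least one of $\bm{b},\bm{c},\bm{d},\bm{e}$ lies in $Q'_{1}\cup Q'_{2}$, i.e.\ has both differences $x_{0}-x_{2}$ and $x_{1}-x_{3}$ congruent to $\pm 3\pmod 8$, and only then does Lemma~\ref{lem:4.7} produce the prime in $P'$. Without supplying an argument of this kind, your proposal does not establish that the values $\equiv 2^{16}\cdot 7\pmod{2^{19}}$ are confined to $2^{16}(A\cup B)$.
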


\begin{lem}\label{lem:4.3}
We have $S({\rm C}_{4} \times {\rm C}_{2}^{2}) \cap 2^{17} \mathbb{Z}_{\rm odd} \subset \left\{ 2^{17} p (2 m + 1) \mid p \in P, \: m \in \mathbb{Z} \right\}$. 
\end{lem}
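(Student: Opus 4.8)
The plan is to combine the factorization $D_{4 \times 2 \times 2}(\bm{a}) = D_{4}(\bm{b}) D_{4}(\bm{c}) D_{4}(\bm{d}) D_{4}(\bm{e})$ with a $2$-adic analysis of each factor. Write $v(\cdot)$ for the $2$-adic valuation. By hypothesis and Lemma~\ref{lem:4.1} we have $v\bigl(D_{4 \times 2 \times 2}(\bm{a})\bigr) = 17$, and by Lemma~\ref{lem:2.5} each of $D_{4}(\bm{b}), \ldots, D_{4}(\bm{e})$ is even; by Remark~\ref{rem:2.4}~(1) the four vectors $\bm{b}, \bm{c}, \bm{d}, \bm{e}$ share a single parity pattern, which therefore has an even number of odd entries. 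The first step is the dichotomy that, for every integer vector, Corollary~\ref{cor:2.1} forces $v(D_{4}) \in \{0\} \cup \{ n : n \geq 4 \}$: if $x_{0} + x_{2}$ and $x_{1} + x_{3}$ have opposite parity then both factors in Corollary~\ref{cor:2.1} are odd, and otherwise a short computation (one factor being a difference, the other a sum, of two squares of equal parity) gives $v(D_{4}) \geq 4$.

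Using Remark~\ref{rem:2.2} to rotate and the freedom to permute $\bm{b}, \bm{c}, \bm{d}, \bm{e}$, I would reduce the shared pattern modulo $2$ to one of $(0,0,0,0)$, the opposite pair $(1,0,1,0)$, the adjacent pair $(1,1,0,0)$, or $(1,1,1,1)$. The second step rules out all but the adjacent pair by sharpening the count. In the all-even case each entry halves, so the total valuation is $16 + v\bigl(D_{4}(\tfrac12\bm{b}) D_{4}(\tfrac12\bm{c}) D_{4}(\tfrac12\bm{d}) D_{4}(\tfrac12\bm{e})\bigr)$, which cannot equal $17$ because a product of quantities with valuations in $\{0\} \cup \{n \geq 4\}$ again has valuation in $\{0\} \cup \{n \geq 4\}$. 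In the opposite-pair case one checks $v(D_{4}) \geq 5$, so the total is $\geq 20$; and in the all-odd case one checks $v(D_{4}) \in \{4\} \cup \{n : n \geq 7\}$, so the total lies in $\{16\} \cup \{n : n \geq 19\}$. Hence only $\bm{b} \equiv \bm{c} \equiv \bm{d} \equiv \bm{e} \equiv (1,1,0,0) \pmod{2}$ can produce $v = 17$.

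In this surviving case I would write, for each $X \in \{\bm{b}, \bm{c}, \bm{d}, \bm{e}\}$ with components $X_{0}, X_{1}, X_{2}, X_{3}$, $D_{4}(X) = f_{X} g_{X}$ with $f_{X} = (X_{0} + X_{2})^{2} - (X_{1} + X_{3})^{2}$ and $g_{X} = (X_{0} - X_{2})^{2} + (X_{1} - X_{3})^{2}$, where all four of $X_{0} \pm X_{2}, X_{1} \pm X_{3}$ are odd. Then $g_{X} = 2 (g_{X}/2)$ with $g_{X}/2$ a sum of two squares congruent to $1$ or $5 \pmod{8}$, so $v(D_{4}(X)) = v(f_{X}) + 1$; moreover, writing $\chi(w) \in \{0,1\}$ for the indicator of $w \equiv \pm 3 \pmod{8}$, one has $v(f_{X}) = 3 \iff \chi(X_{0} + X_{2}) \neq \chi(X_{1} + X_{3})$, with $v(f_{X}) \geq 4$ otherwise, and $g_{X}/2 \equiv 5 \pmod{8} \iff \chi(X_{0} - X_{2}) \neq \chi(X_{1} - X_{3})$. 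Since $v = 4 + \sum_{X} v(f_{X}) = 17$ and each $v(f_{X}) \geq 3$, the unique possibility is $\{3,3,3,4\}$, so exactly three of the $X$ satisfy $\chi(X_{0} + X_{2}) \neq \chi(X_{1} + X_{3})$.

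The final step, and the main obstacle, is to transfer this parity from the plus-combinations to the minus-combinations that govern the sum-of-two-squares factors $g_{X}$. Multiplicativity of the Jacobi symbol $\bigl(\tfrac{2}{\cdot}\bigr)$ gives $\chi(X_{0} - X_{2}) + \chi(X_{0} + X_{2}) \equiv [\, X_{2} \equiv 2 \ (\mathrm{mod}\ 4)\,] \pmod{2}$, via $\bigl(\tfrac{2}{X_{0}^{2} - X_{2}^{2}}\bigr)$, and likewise for the index pair $1,3$; summing over $X$ and using $b_{2} + c_{2} + d_{2} + e_{2} \equiv b_{3} + c_{3} + d_{3} + e_{3} \equiv 0 \pmod{4}$ from Remark~\ref{rem:2.4}~(2) shows that $\#\{X : X_{2} \equiv 2 \ (\mathrm{mod}\ 4)\}$ and $\#\{X : X_{3} \equiv 2 \ (\mathrm{mod}\ 4)\}$ are both even. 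Therefore $\#\{X : g_{X}/2 \equiv 5 \pmod{8}\} \equiv \#\{X : \chi(X_{0} + X_{2}) \neq \chi(X_{1} + X_{3})\} = 3 \pmod{2}$, whence $\prod_{X} (g_{X}/2) \equiv 5 \pmod{8}$. Being a product of sums of two squares this quantity is itself a sum of two squares, and since it is $\equiv 5 \pmod{8}$ it must have a prime factor $p \equiv - 3 \pmod{8}$, that is, $p \in P$. As $p$ divides the odd part of $D_{4 \times 2 \times 2}(\bm{a})$, the value lies in $\{2^{17} p (2 m + 1) \mid p \in P, \ m \in \mathbb{Z}\}$, as required.
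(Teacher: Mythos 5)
Your proposal is correct, and its skeleton is the paper's: factor $D_{4\times2\times2}(\bm{a})$ into four $D_{4}$'s, use $2$-adic valuation facts (your step~1--2 reproves the content of Lemma~\ref{lem:4.4} and Lemma~\ref{lem:2.5} to force the shared parity pattern to be the adjacent pair and the valuation split to be $\{3,3,3,4\}$ on the difference-of-squares factors, i.e.\ three vectors in $Q_{3}$ and one in $Q_{4}$), and then invoke Remark~\ref{rem:2.4}~(2) to force one of the sum-of-two-squares factors to be $\equiv 10 \pmod{16}$, whence a prime factor in $P$. The one place you genuinely diverge is the final transfer from the ``plus'' combinations to the ``minus'' combinations. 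The paper uses the single congruence $\sum_{X}(X_{0}-X_{1})\equiv 0 \pmod 4$ to isolate one vector lying in $Q'_{3}\cup Q'_{4}$ and then checks, in two separate lemmas (\ref{lem:4.5} and \ref{lem:4.10}), that $(x_{0}-x_{2})(x_{1}-x_{3})\equiv\pm3\pmod 8$ for that vector by expanding the product directly. You instead run a global parity count: the identity $\chi(X_{0}-X_{2})+\chi(X_{0}+X_{2})\equiv[X_{2}\equiv 2\ (\mathrm{mod}\ 4)]\pmod 2$ (the supplementary law for $\bigl(\tfrac{2}{\cdot}\bigr)$ applied to $X_{0}^{2}-X_{2}^{2}$), together with the evenness of $\#\{X: X_{2}\equiv 2\}$ and $\#\{X: X_{3}\equiv 2\}$ from Remark~\ref{rem:2.4}~(2), shows the number of $X$ with $g_{X}/2\equiv 5\pmod 8$ has the same parity as the number in $Q_{3}$, namely odd. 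This unifies the two cases the paper treats separately and avoids having to pinpoint which vector is responsible; the paper's version is more elementary (no quadratic-residue symbols) and yields the slightly more precise information of \emph{which} factor carries the prime $p\in P$. Both are valid; I checked your Jacobi-symbol identity and the concluding sum-of-two-squares argument ($N\equiv5\pmod 8$ a sum of two squares forces a prime divisor $\equiv 5\pmod 8$), and they hold.
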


Lemma~$\ref{lem:4.1}$ is immediately obtained from Lemma~$\ref{lem:2.5}$ and Kaiblinger's \cite[Theorem~1.1]{MR2914452} result $S({\rm C}_{4}) = \mathbb{Z}_{\rm odd} \cup 2^{4} \mathbb{Z}$. 
To prove Lemma~$\ref{lem:4.2}$, 
we use the following six lemmas. 

\begin{lem}[{\cite[Lemma~$3.2$]{https://doi.org/10.48550/arxiv.2211.01597}}]\label{lem:4.4}
For any $k, l, m, n \in \mathbb{Z}$, the following hold: 
\begin{enumerate}
\item[$(1)$] $D_{4}(2 k, 2 l, 2 m, 2 n) \in \begin{cases} 2^{4} \mathbb{Z}_{\rm odd}, & k + m \not\equiv l + n \pmod{2}, \\ 2^{8} \mathbb{Z}, & k + m \equiv l + n \pmod{2}; \end{cases}$
\item[$(2)$] $D_{4}(2 k + 1, 2 l + 1, 2 m + 1, 2 n + 1) \in \begin{cases} 2^{4} \mathbb{Z}_{\rm odd}, & k + m \not\equiv l + n \pmod{2}, \\ 2^{7} \mathbb{Z}_{\rm odd}, & (k + m) (l + n) \equiv - 1 \pmod{4}, \\ 2^{9} \mathbb{Z}, & \text{otherwise}; \end{cases}$
\item[$(3)$] $D_{4}(2 k, 2 l + 1, 2 m, 2 n + 1)$ \\ 
\quad $\in \begin{cases} 2^{5} \mathbb{Z}_{\rm odd}, & k - m \equiv l - n \equiv 1 \pmod{2}, \\ 2^{6} \mathbb{Z}_{\rm odd}, & k \equiv m \pmod{2} \: \: \text{and} \: \: (2 k + 2 l + 1) (2 m + 2 n + 1) \equiv \pm 3 \pmod{8}, \\ 2^{7} \mathbb{Z}, & \text{otherwise}; \end{cases}$
\item[$(4)$] $D_{4}(2 k, 2 l, 2 m + 1, 2 n + 1) \in \begin{cases} 2^{4} \mathbb{Z}_{\rm odd}, & (2 k + 2 m + 1) (2 l + 2 n + 1) \equiv \pm 3 \pmod{8}, \\ 2^{5} \mathbb{Z}, & (2 k + 2 m + 1) (2 l + 2 n + 1) \equiv \pm 1 \pmod{8}.  \end{cases}$
\end{enumerate}
\end{lem}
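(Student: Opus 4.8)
The plan is to derive every part from the closed form of Corollary~\ref{cor:2.1},
$$D_{4}(x_{0},x_{1},x_{2},x_{3}) = \bigl\{(x_{0}+x_{2})^{2}-(x_{1}+x_{3})^{2}\bigr\}\bigl\{(x_{0}-x_{2})^{2}+(x_{1}-x_{3})^{2}\bigr\},$$
and then to track the $2$-adic valuation of the two factors separately. Writing $u=x_{0}+x_{2}$, $v=x_{1}+x_{3}$, $s=x_{0}-x_{2}$ and $t=x_{1}-x_{3}$, so that $D_{4}=(u-v)(u+v)(s^{2}+t^{2})$, the whole argument rests on three elementary facts: an odd square is $\equiv 1\pmod 8$, so a sum of two odd squares has valuation exactly $1$ while a difference of two odd squares is divisible by $8$; for $u,v$ odd exactly one of $u\pm v$ is $\equiv 2\pmod 4$; and, refining the first fact, an odd square is $\equiv 1$ or $9\pmod{16}$ according as its base is $\equiv\pm1$ or $\pm3\pmod 8$.

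For each prescribed parity pattern I would substitute into $u,v,s,t$, pull out the forced powers of $2$, and read off the valuation. In (1) all of $u,v,s,t$ are even, so $D_{4}=16\bigl\{(k+m)^{2}-(l+n)^{2}\bigr\}\bigl\{(k-m)^{2}+(l-n)^{2}\bigr\}$, and the parity of $k+m$ against $l+n$ decides whether both inner factors are odd (valuation $4$) or the two arguments share a parity, forcing valuation at least $8$. Case (2) is the same substitution shifted by the all-odd constant; when $k+m\equiv l+n$ I would split according to $(k+m)(l+n)\bmod 4$, using that for odd $K$ the integer $K+1$ has valuation $1$ exactly when $K\equiv1\pmod 4$, which is precisely what separates the $2^{7}$ and $2^{9}$ outcomes. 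In (4), $u,v$ and $s,t$ are all odd, so $s^{2}+t^{2}$ contributes exactly $2^{1}$ while $u^{2}-v^{2}$ is handled by the mod-$16$ refinement: $u^{2}\equiv v^{2}\pmod{16}$ (valuation at least $4$) precisely when $u$ and $v$ lie in the same class $\{\pm1\}$ or $\{\pm3\}$ modulo $8$, i.e. when $uv\equiv\pm1\pmod 8$, and since $(x_{0}+x_{2})(x_{1}+x_{3})=uv$ this is exactly the stated dichotomy.

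The delicate case is (3), where $u,v,s,t$ are all even yet the forced valuation ($2^{5}$, $2^{6}$ or $2^{7}$) is governed by the \emph{cross} product $(x_{0}+x_{1})(x_{2}+x_{3})\bmod 8$ rather than by a single parity. Here I would write $D_{4}=16(P^{2}-Q^{2})(R^{2}+S^{2})$ with $P=k+m$, $Q=l+n+1$, $R=k-m$, $S=l-n$, note that $Q$ and $S$ always have opposite parity, and use the identity $(x_{0}+x_{1})(x_{2}+x_{3})=(P+Q)^{2}-(R+S)^{2}$ to convert the quoted residue condition into the parities of $R,S$ together with a mod-$8$ statement about $P\pm Q$. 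The main obstacle throughout is bookkeeping: one must retain the mod-$16$ (not merely mod-$8$) information about odd squares in exactly the borderline subcases, and verify that the convenient product residues appearing in the statement really do coincide with the valuation jumps. Once the three elementary facts above are in place, each of the four patterns reduces to a finite, essentially mechanical verification.
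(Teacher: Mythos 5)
Your plan is correct, and it is worth noting that the paper itself gives no proof of this lemma at all: it is imported verbatim by citation from \cite[Lemma~3.2]{https://doi.org/10.48550/arxiv.2211.01597}, so the only available comparison is with the natural argument, which is exactly what you propose. I checked the bookkeeping in each case, including the delicate part $(3)$: with $P=k+m$, $Q=l+n+1$, $R=k-m$, $S=l-n$ your identity $(2k+2l+1)(2m+2n+1)=(P+Q)^{2}-(R+S)^{2}$ does hold, and reducing it modulo $8$ shows that the condition ``$\equiv\pm3\pmod 8$'' is equivalent to $R+S\equiv 2\pmod 4$ (when $Q$ is odd) or to $P+Q\equiv 2\pmod 4$ (when $Q$ is even), which is precisely what separates the $2^{6}\mathbb{Z}_{\rm odd}$ outcome from the $2^{7}\mathbb{Z}$ outcome; the remaining parity pattern $k\not\equiv m$, $l\equiv n\pmod 2$ lands in $2^{7}\mathbb{Z}$ via the difference of two odd squares. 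The three elementary facts you isolate (odd squares are $1$ or $9$ bmod $16$ according to the residue of the base mod $8$, a sum of two odd squares has $2$-adic valuation exactly $1$, and for odd $u,v$ exactly one of $u\pm v$ is $\equiv 2\pmod 4$) suffice for all four parts, so the proposal is a complete and correct route to the statement.
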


\begin{lem}\label{lem:4.5}
Suppose that $(x_{0}, x_{1}, x_{2}, x_{3}) \equiv (0, 0, 1, 1) \pmod{2}$ and $(x_{0} + x_{2}) (x_{1} + x_{3}) \equiv \pm 3 \pmod{8}$ hold. 
Then the following hold: 
\begin{enumerate}
\item[$(1)$] If $x_{0} \equiv x_{1} \pmod{4}$, then $(x_{0} - x_{2})^{2} + (x_{1} - x_{3})^{2} \in \left\{ 2 (8 k - 3) \mid k \in \mathbb{Z} \right\}$; 
\item[$(2)$] If $x_{0} \not\equiv x_{1} \pmod{4}$, then $(x_{0} - x_{2})^{2} + (x_{1} - x_{3})^{2} \in \left\{ 2 (8 k + 1) \mid k \in \mathbb{Z} \right\}$. 
\end{enumerate}
\end{lem}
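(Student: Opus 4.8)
The plan is to reduce everything to a single computation modulo $16$. Observe first that $2(8k-3) = 16k - 6$ runs over exactly the integers $\equiv 10 \pmod{16}$ as $k$ ranges over $\mathbb{Z}$, while $2(8k+1) = 16k + 2$ runs over exactly the integers $\equiv 2 \pmod{16}$. Hence it suffices to show that $(x_0 - x_2)^2 + (x_1 - x_3)^2 \equiv 10 \pmod{16}$ in case (1) and $\equiv 2 \pmod{16}$ in case (2). Writing $p := x_0 + x_2$, $q := x_1 + x_3$, $u := x_0 - x_2$, $v := x_1 - x_3$, all four are odd (even $\pm$ odd), and I will use the elementary fact that an odd integer $n$ satisfies $n^2 \equiv 1 \pmod{16}$ if $n \equiv \pm 1 \pmod 8$ and $n^2 \equiv 9 \pmod{16}$ if $n \equiv \pm 3 \pmod 8$.

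Next I would extract the content of the hypothesis $(x_0 + x_2)(x_1 + x_3) = pq \equiv \pm 3 \pmod 8$. Squaring gives $(pq)^2 \equiv 9 \pmod{16}$, and since each of $p^2, q^2$ is $\equiv 1$ or $\equiv 9 \pmod{16}$, this forces exactly one of them to be $\equiv 9$; consequently $p^2 + q^2 \equiv 1 + 9 \equiv 10 \pmod{16}$. This is the only place the hypothesis on $(x_0 + x_2)(x_1 + x_3)$ is used.

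The crux is to transfer this information from $p, q$ to $u, v$, and here the parity conditions $x_0 \equiv x_1 \pmod 4$ versus $x_0 \not\equiv x_1 \pmod 4$ enter. I use the identities $p^2 - u^2 = 4 x_0 x_2$ and $q^2 - v^2 = 4 x_1 x_3$. Because $x_2, x_3$ are odd and $x_0, x_1$ are even, one checks $4 x_0 x_2 \equiv 4 x_0 \pmod{16}$ and $4 x_1 x_3 \equiv 4 x_1 \pmod{16}$ (the difference $4x_0(x_2-1)$ is divisible by $16$, both factors beyond the $4$ being even), whence $u^2 + v^2 \equiv (p^2 + q^2) - 4(x_0 + x_1) \equiv 10 - 4(x_0 + x_1) \pmod{16}$. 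Finally, $4 x_0 \pmod{16}$ equals $0$ or $8$ according as $x_0 \equiv 0$ or $2 \pmod 4$ (and similarly for $x_1$): in case (1) we have $x_0 + x_1 \equiv 0 \pmod 4$, so $4(x_0 + x_1) \equiv 0 \pmod{16}$ and $u^2 + v^2 \equiv 10 \pmod{16}$; in case (2) we have $x_0 + x_1 \equiv 2 \pmod 4$, so $4(x_0 + x_1) \equiv 8 \pmod{16}$ and $u^2 + v^2 \equiv 2 \pmod{16}$.

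I expect the only real obstacle to be bookkeeping: correctly pinning down $u^2, v^2$ modulo $16$ (not merely modulo $8$, where a sum of two odd squares is always $\equiv 2$), and verifying the reductions $4 x_0 x_2 \equiv 4 x_0$ and $4 x_0 \in \{0, 8\} \pmod{16}$. Once the single congruence $u^2 + v^2 \equiv 10 - 4(x_0 + x_1) \pmod{16}$ is in hand, both cases of the lemma fall out immediately from the parity of $x_0 + x_1 \pmod 4$.
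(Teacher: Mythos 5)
Your proof is correct. It follows the same overall strategy as the paper --- an elementary congruence computation modulo $16$ resting on the fact that an odd square is $1$ or $9 \pmod{16}$ according as the root is $\pm 1$ or $\pm 3 \pmod 8$ --- but the transfer step is organized differently. The paper expands $(x_{0} - x_{2})(x_{1} - x_{3}) = (x_{0} + x_{2})(x_{1} + x_{3}) - 2 x_{0} x_{3} - 2 x_{2} x_{1}$ and reduces modulo $8$ (using the case hypothesis to replace $x_{1}$ by $x_{0}$ or $x_{0} + 2$), obtaining that the product of the differences is $\pm 3$ or $\pm 1 \pmod 8$, and only then passes to the sum of squares modulo $16$. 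You instead first pin down $(x_{0}+x_{2})^{2} + (x_{1}+x_{3})^{2} \equiv 10 \pmod{16}$ from the hypothesis and then shift to the differences via $(x_{0}+x_{2})^{2} - (x_{0}-x_{2})^{2} = 4 x_{0} x_{2} \equiv 4 x_{0} \pmod{16}$ (and likewise for the other pair), so the whole case distinction collapses to the single quantity $4(x_{0} + x_{1}) \bmod 16$. All of your intermediate claims check out: the identification of $\{2(8k-3)\}$ and $\{2(8k+1)\}$ with the residues $10$ and $2$ modulo $16$, the deduction that exactly one of $(x_{0}+x_{2})^{2}, (x_{1}+x_{3})^{2}$ is $9 \pmod{16}$, and the divisibility of $4 x_{0}(x_{2}-1)$ by $16$. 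Your formulation arguably makes the dependence on $x_{0} + x_{1} \pmod 4$ more transparent, at the cost of not producing the intermediate statement $(x_{0}-x_{2})(x_{1}-x_{3}) \equiv \pm 3$ or $\pm 1 \pmod 8$, which the paper reuses in spirit in Lemma 4.10; otherwise the two arguments are of equal weight.
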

\begin{proof}
We prove (1). 
If $x_{0} \equiv x_{1} \pmod{4}$, then 
\begin{align*}
(x_{0} - x_{2}) (x_{1} - x_{3}) 
&= (x_{0} + x_{2}) (x_{1} + x_{3}) - 2 x_{0} x_{3} - 2 x_{2} x_{1} \\ 
&\equiv (x_{0} + x_{2}) (x_{1} + x_{3}) - 2 x_{0} (x_{3} + x_{2}) \\ 
&\equiv \pm 3 \pmod{8}. 
\end{align*}
Thus, $(x_{0} - x_{2})^{2} + (x_{1} - x_{3})^{2} \equiv - 6 \pmod{16}$ holds. 
We prove (2). 
If $x_{0} \not\equiv x_{1} \pmod{4}$, then 
\begin{align*}
(x_{0} - x_{2}) (x_{1} - x_{3}) 
&= (x_{0} + x_{2}) (x_{1} + x_{3}) - 2 x_{0} x_{3} - 2 x_{2} x_{1} \\ 
&\equiv (x_{0} + x_{2}) (x_{1} + x_{3}) - 2 x_{0} x_{3} - 2 x_{2} (x_{0} + 2) \\ 
&\equiv (x_{0} + x_{2}) (x_{1} + x_{3}) - 2 x_{0} (x_{3} + x_{2}) - 4 x_{2} \\ 
&\equiv \pm 1 \pmod{8}. 
\end{align*}
Thus, $(x_{0} - x_{2})^{2} + (x_{1} - x_{3})^{2} \equiv 2 \pmod{16}$ holds. 
\end{proof}

\begin{lem}\label{lem:4.6}
Suppose that $(x_{0} + x_{2})^{2} - (x_{1} + x_{3})^{2}$ has no prime factor of the form $8 k \pm 3$. 
Then the following hold: 
\begin{enumerate}
\item[$(1)$] If $x_{0} + x_{2} \equiv \pm 3, \: \: x_{1} + x_{3} \equiv \pm 1 \pmod{8}$, then $(x_{0} + x_{2})^{2} - (x_{1} + x_{3})^{2} \in \left\{ 8 (8 k + 1) \mid k \in \mathbb{Z} \right\}$; 
\item[$(2)$] If $x_{0} + x_{2} \equiv \pm 1, \: \: x_{1} + x_{3} \equiv \pm 3 \pmod{8}$, then $(x_{0} + x_{2})^{2} - (x_{1} + x_{3})^{2} \in \left\{ 8 (8 k - 1) \mid k \in \mathbb{Z} \right\}$. 
\end{enumerate}
\end{lem}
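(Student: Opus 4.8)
The plan is to reformulate everything in terms of $u := x_{0} + x_{2}$ and $v := x_{1} + x_{3}$, so that the quantity under study is $u^{2} - v^{2}$ and the hypothesis reads: every odd prime dividing $u^{2}-v^{2}$ is $\equiv \pm 1 \pmod 8$. In the situation of part (1) both $u$ and $v$ are odd with $u^{2} \equiv 9$ and $v^{2} \equiv 1 \pmod{16}$, so $u^{2} - v^{2} \equiv 8 \pmod{16}$; hence I may write $u^{2} - v^{2} = 8 m$ with $m$ odd, and the claim becomes $m \equiv 1 \pmod 8$. Since each prime factor of $m$ is $\equiv \pm 1 \pmod 8$, multiplicativity already gives $m \equiv \pm 1 \pmod 8$; the whole point is to pin down the sign, and the obstacle is that $m \bmod 8$ is genuinely not determined by $u, v \bmod 8$ alone (already $u^{2}-v^{2} \bmod 32$ depends on $u, v \bmod 16$). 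So the arithmetic hypothesis must enter in an essential way.

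The key step is to factor $u^{2} - v^{2} = (u - v)(u + v)$. Both factors are even and, since the product is divisible by exactly $2^{3}$, one factor is $2 \times (\text{odd})$ and the other $4 \times (\text{odd})$. Writing the odd parts of the two factors as $s$ and $t$ (so that one factor is $2s$ and the other $4t$), I obtain $m = s t$ and, solving for $u$, the clean identity $u = s + 2 t$. The crucial observation is that $s \mid m$ and $t \mid m$, so every prime factor of $s$ and of $t$ already divides $m$ and is therefore $\equiv \pm 1 \pmod 8$; hence $s \equiv \pm 1 \pmod 8$ and $t \equiv \pm 1 \pmod 8$ individually. This is precisely where the hypothesis does its work: it descends from the product to each factor.

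With $s, t \in \{1, 7\} \pmod 8$, I would finish part (1) by a short case check on $u = s + 2 t \pmod 8$: the pairs $(s,t) \equiv (1,7)$ and $(7,1)$ give $u \equiv \pm 1$, which is excluded by the hypothesis $u \equiv \pm 3 \pmod 8$, while the admissible pairs $(1,1)$ and $(7,7)$ both give $m = s t \equiv 1 \pmod 8$. Hence $u^{2} - v^{2} = 8 m \in \{ 8(8k+1) \mid k \in \mathbb{Z}\}$. Part (2) then follows from part (1) by interchanging the roles of $u$ and $v$: there $v^{2} - u^{2} = 8 m$ with $m \equiv 1 \pmod 8$ by the same argument, whence $u^{2} - v^{2} = 8(-m)$ with $-m \equiv -1 \pmod 8$, i.e. $u^{2} - v^{2} \in \{8(8k-1) \mid k \in \mathbb{Z}\}$.

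I expect the remaining points to be bookkeeping rather than conceptual: confirming that the $2$-adic split of $(u-v)(u+v)$ is always $1+2$ (which is exactly the congruence $u^{2}-v^{2}\equiv 8 \pmod{16}$), and checking that the descent $s, t \equiv \pm 1 \pmod 8$ is unaffected by the signs of $u \pm v$, since a negative factor changes nothing ($-1 \equiv 7$ and $-7 \equiv 1$ are still $\equiv \pm 1 \pmod 8$). I note that no coprimality reduction between $u$ and $v$ is required: the identities $m = s t$ and $u = s + 2 t$ hold verbatim in general, so the argument needs only the factorization and the descent of the prime-factor condition.
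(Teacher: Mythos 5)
Your argument is correct. The identity $u=s+2t$ is right in both possible splittings of the $2$-adic valuation (if $u-v=2s$, $u+v=4t$ then $2u=2s+4t$, and likewise with the roles swapped), the exclusion $s,t\neq 0$ is guaranteed by $u\not\equiv\pm v\pmod 8$, and the descent step is sound: since $s$ and $t$ are odd divisors of $u^{2}-v^{2}$, every prime factor of each is $\equiv\pm1\pmod 8$, so $s,t\equiv\pm1\pmod 8$ (signs included, as you note). The four-way check on $(s,t)\bmod 8$ then gives $st\equiv 1$ exactly when $u\equiv\pm3$ and $st\equiv-1$ exactly when $u\equiv\pm1\pmod 8$, which is both parts at once.

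The paper proves the same lemma from the same two ingredients --- the factorization $u^{2}-v^{2}=(u+v)(u-v)$ and the fact that an integer $\equiv\pm3\pmod 8$ must carry a prime factor $\equiv\pm3\pmod 8$ --- but organizes the bookkeeping differently: it fixes a sign case such as $(u,v)\equiv(3,1)\pmod 8$, refines to the four residue classes of $(u,v)$ modulo $16$, and writes out the factorization $8(4l+4m+1)(8l-8m+1)$ etc.\ explicitly in each class, observing that two classes force a cofactor $\equiv 5\pmod 8$ (hence a forbidden prime) and the other two force the odd part to be $\equiv 1\pmod 8$; part (2) is then obtained by swapping variables, and the remaining sign cases are declared analogous. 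Your parametrization by the odd cofactors $s,t$ via $u\equiv s+2t\pmod 8$ collapses all of that into one uniform computation with no mod-$16$ enumeration and no separate treatment of sign cases, at the small cost of having to justify the $1+2$ split of the $2$-adic valuation (which you do, via $u^{2}-v^{2}\equiv 8\pmod{16}$). Both proofs are complete; yours is the more economical write-up.
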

\begin{proof}
We prove (1). 
First, we consider the case of $(x_{0} + x_{2}, x_{1} + x_{3}) \equiv (3, 1) \pmod{8}$. 
Then $(x_{0} + x_{2}, x_{1} + x_{3}) \equiv (3, 1), (3, - 7), (- 5, 1) \: \text{or} \: (- 5, - 7) \pmod{16}$. 
From 
\begin{align*}
(16 l + 3)^{2} - (16 m + 1)^{2} 
&= (16 l + 3 + 16 m + 1) (16 l + 3 - 16 m - 1) \\ 
&= 8 (4 l + 4 m + 1) (8 l - 8 m + 1), \\ 
(16 l + 3)^{2} - (16 m - 7)^{2} 
&= (16 l + 3 + 16 m - 7) (16 l + 3 - 16 m + 7) \\ 
&= 8 (4 l + 4 m - 1) (8 l - 8 m + 5), \\ 
(16 l - 5)^{2} - (16 m + 1)^{2} 
&= (16 l - 5 + 16 m + 1) (16 l - 5 - 16 m - 1) \\ 
&= 8 (4 l + 4 m - 1) (8 l - 8 m - 3), \\ 
(16 l - 5)^{2} - (16 m - 7)^{2} 
&= (16 l - 5 + 16 m - 7) (16 l - 5 - 16 m + 7) \\ 
&= 8 (4 l + 4 m - 3) (8 l - 8 m + 1), 
\end{align*}
we find that if $(x_{0} + x_{2}, x_{1} + x_{3}) \equiv (3, - 7) \: \text{or} \: (- 5, 1) \pmod{16}$, 
then $(x_{0} + x_{2})^{2} - (x_{1} + x_{3})^{2}$ has at least one prime factor of the form $8 k \pm 3$. 
Also, if $(x_{0} + x_{2}, x_{1} + x_{3}) \equiv (3, 1) \: \text{or} \: (- 5, - 7) \pmod{16}$, 
then $(x_{0} + x_{2})^{2} - (x_{1} + x_{3})^{2}$ is of the form $8 (8 k + 1)$ or has at least one prime factor of the form $8 k \pm 3$. 
In the same way, we can prove for the cases $(x_{0} + x_{2}, x_{1} + x_{3}) \equiv (3, - 1), (- 3, 1)$ and $(- 3, - 1) \pmod{8}$. 
Replacing $(x_{0}, x_{1}, x_{2}, x_{3})$ with $(x_{1}, x_{2}, x_{3}, x_{0})$ in (1), we obtain (2). 
\end{proof}

\begin{lem}\label{lem:4.7}
Suppose that $(x_{0} - x_{2})^{2} + (x_{1} - x_{3})^{2}$ has no prime factor of the form $8 k \pm 3$ and $x_{0} - x_{2} \equiv \pm 3, \: x_{1} - x_{3} \equiv \pm 3 \pmod{8}$ hold. 
Then $(x_{0} - x_{2})^{2} + (x_{1} - x_{3})^{2}$ has at least one prime factor of the form $p = a^{2} + b^{2} \equiv 1 \pmod{8}$ with $a + b \equiv \pm 3 \pmod{8}$. 
\end{lem}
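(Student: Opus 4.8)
The plan is to transfer the problem into the Gaussian integers $\mathbb{Z}[i]$ (with $i=\sqrt{-1}$) and to detect membership in $P'$ by a single multiplicative ``type'' homomorphism. Write $s:=x_{0}-x_{2}$ and $t:=x_{1}-x_{3}$, so that by hypothesis $s\equiv\pm3$ and $t\equiv\pm3\pmod 8$; in particular $s,t$ are odd, $s^{2}\equiv t^{2}\equiv 9\pmod{16}$, and hence $N:=s^{2}+t^{2}=(s+ti)(s-ti)\equiv 2\pmod{16}$. Thus $N=2M$ with $M$ odd. Since $s,t$ are odd, $1+i$ divides $s+ti$, and since $N\equiv 2\pmod 4$ it divides it exactly once, so $w:=(s+ti)/(1+i)\in\mathbb{Z}[i]$ is coprime to $1+i$ with $|w|^{2}=M$. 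By the hypothesis that $N$ has no rational prime factor of the form $8k\pm3$, every odd rational prime dividing $M$ is $\equiv 1$ or $7\pmod 8$.

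The key device is a homomorphism recording the residue class $\pmod 8$ of ``real part plus imaginary part''. Let $G'$ be the set of Gaussian integers $a+bi$ with $a$ odd and $b$ even; one checks $G'$ is closed under multiplication, and I claim $\tau(a+bi):=(a+b)\bmod 8\in(\mathbb{Z}/8)^{\times}$ is multiplicative on $G'$. Indeed, for $a+bi,\,a'+b'i\in G'$ the real-plus-imaginary part of the product equals $(a+b)(a'+b')-2bb'$, and $2bb'\equiv 0\pmod 8$ because $b,b'$ are both even. Composing with the quotient $(\mathbb{Z}/8)^{\times}\to(\mathbb{Z}/8)^{\times}/\{\pm1\}\cong\mathbb{Z}/2$ yields a homomorphism $\overline{\tau}$ whose two classes are $\{\pm1\}$ and $\{\pm3\}$. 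For a rational prime $p\equiv1\pmod 8$ write $p=a^{2}+b^{2}$ with $a$ odd; since $p\equiv1\pmod 8$ forces $4\mid b$, the normalized Gaussian prime $\pi_{p}=a+bi$ lies in $G'$, and $\tau(\pi_{p})=a+b\equiv a-b=\tau(\overline{\pi_{p}})\pmod 8$. Hence $\overline{\tau}(\pi_{p})=\overline{\tau}(\overline{\pi_{p}})$ is the nontrivial class exactly when $a+b\equiv\pm3\pmod 8$, i.e.\ exactly when $p\in P'$.

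I then evaluate $\overline{\tau}(w)$ in two ways. After replacing $w$ by $iw$ if necessary (so as to land in $G'$ with odd real part), a direct computation from $w=\tfrac{s+t}{2}+\tfrac{t-s}{2}i$ gives the $\tau$-value $t$ in one case and $s$ in the other; since $s,t\equiv\pm3\pmod 8$, in both cases $\overline{\tau}(w)$ is the \emph{nontrivial} class. On the other hand, factor $w$ into Gaussian primes: the unit contributes $\pm1$ (trivial class); each inert prime $q\equiv7\pmod 8$ satisfies $\tau(q)=q\equiv-1\pmod 8$ (trivial class); and, by the previous paragraph, each split prime $p\equiv1\pmod 8$ contributes the trivial class unless $p\in P'$. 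Were no prime factor of $M$ in $P'$, every factor would contribute the trivial class and $\overline{\tau}(w)$ would be trivial, contradicting the computation above. Hence some prime factor of $M$, and therefore of $N$, lies in $P'$.

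The main obstacle is the construction and bookkeeping of $\tau$: proving multiplicativity $\pmod 8$ on $G'$ (the vanishing of the correction term $2bb'$), checking that it descends to $\mathbb{Z}/2$, and—crucially—verifying that its value on a split prime coincides with membership in $P'$, which uses $4\mid b$ for $p\equiv1\pmod 8$ to guarantee $\tau(\pi_{p})=\tau(\overline{\pi_{p}})$. The role of the hypothesis is then exactly to force every non-$P'$ contribution (the unit, the inert primes $\equiv7\pmod 8$, and the split primes $\equiv1\pmod 8$ outside $P'$) into the trivial class, so that the nontrivial value of $\overline{\tau}(w)$—coming from $s,t\equiv\pm3\pmod 8$—can only be accounted for by a prime factor in $P'$.
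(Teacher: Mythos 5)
Your proof is correct. The logical skeleton is the same as the paper's --- both arguments amount to showing that if every prime $p\equiv 1\pmod 8$ dividing $(x_0-x_2)^2+(x_1-x_3)^2$ had $a+b\equiv\pm 1\pmod 8$, then $x_0-x_2$ and $x_1-x_3$ would themselves be $\equiv\pm 1\pmod 8$ --- but the paper disposes of this key implication in one line by citing Lemma~4.8 of their earlier paper on $S({\rm C}_{16})$, whereas you prove it from scratch. Your construction (the submonoid $G'$ of Gaussian integers with odd real and even imaginary part, the multiplicative map $\tau(a+bi)=a+b\bmod 8$ with the correction term $2bb'\equiv 0\pmod 8$, its descent to $(\mathbb{Z}/8)^\times/\{\pm1\}$, the verification that $4\mid b$ for $p\equiv 1\pmod 8$ so that $\tau(\pi_p)=\tau(\overline{\pi_p})$ and the class depends only on $p$, and the unit bookkeeping forcing $u=\pm1$) is complete and each step checks out; in particular $\tau$ of the normalized $w=(s+ti)/(1+i)$ really is $s$ or $t$ and hence lands in the nontrivial class $\{\pm3\}$. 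What your route buys is self-containedness and an explanation of \emph{why} $P'$ is the right set (it is exactly the kernel complement of a quadratic character on the split primes); what the paper's route buys is brevity, at the cost of the reader having to chase the external reference.
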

\begin{proof}
From the assumption, 
there exist primes $p_{i} \equiv 1, \: q_{i} \equiv - 1 \pmod{8}$ and integers $k_{i}, l_{i} \geq 0$ satisfying $
(x_{0} - x_{2})^{2} + (x_{1} - x_{3})^{2} = 2 p_{1}^{k_{1}} \cdots p_{r}^{k_{r}} q_{1}^{2 l_{1}} \cdots q_{s}^{2 l_{s}}$. 
We prove by contradiction. 
If $p_{i} = a_{i}^{2} + b_{i}^{2}$ with $a_{i} + b_{i} \equiv \pm 1 \pmod{8}$ for any $1 \leq i \leq r$, 
then $x_{0} - x_{2}, x_{1} - x_{3} \in \left\{ 8 k \pm 1 \mid k \in \mathbb{Z} \right\}$ hold from \cite[Lemma~4.8]{Yamaguchi}. 
This is a contradiction. 
\end{proof}

\begin{lem}\label{lem:4.8}
Suppose that $b_{0} + b_{2} \equiv  b_{1} + b_{3} \equiv 0 \pmod{2}$ and $D_{4 \times 2 \times 2}(\bm{a}) \in 2^{16} \mathbb{Z}_{{\rm odd}}$. 
Then we have $D_{4 \times 2 \times 2}(\bm{a}) \in \left\{ 2^{16} (4 m + 1) \mid m \in \mathbb{Z} \right\}$. 
\end{lem}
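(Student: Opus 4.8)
The plan is to run the $2$-adic valuation of the factorization $D_{4 \times 2 \times 2}(\bm{a}) = D_{4}(\bm{b}) D_{4}(\bm{c}) D_{4}(\bm{d}) D_{4}(\bm{e})$ against a budget of exactly $16$. By Remark~\ref{rem:2.4}~(1) the four vectors $\bm{b}, \bm{c}, \bm{d}, \bm{e}$ share a single parity pattern, and the hypothesis $b_{0} + b_{2} \equiv b_{1} + b_{3} \equiv 0 \pmod{2}$ forces this common pattern, read coordinatewise mod $2$, to be one of $(0,0,0,0)$, $(1,1,1,1)$, $(0,1,0,1)$ or $(1,0,1,0)$. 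In every one of these cases Corollary~\ref{cor:2.1} shows that both bracketed factors of each $D_{4}(\cdot)$ are even, so each of the four determinants is divisible by $2^{4}$.

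First I would eliminate the two mixed patterns $(0,1,0,1)$ and $(1,0,1,0)$. For these, Lemma~\ref{lem:4.4}~(3) (after a cyclic shift via Remark~\ref{rem:2.2} in the $(1,0,1,0)$ case, which only changes a sign) places each of the four factors in $2^{5}\mathbb{Z}$, whence $D_{4 \times 2 \times 2}(\bm{a}) \in 2^{20}\mathbb{Z}$, contradicting $D_{4 \times 2 \times 2}(\bm{a}) \in 2^{16}\mathbb{Z}_{\rm odd}$. Only the all-even and all-odd patterns survive. Since each factor is already divisible by $2^{4}$ and the product has $2$-adic valuation exactly $16$, each factor must equal $2^{4}$ times an odd number. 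Writing $D_{4 \times 2 \times 2}(\bm{a})/2^{16}$ as the product of these four odd numbers, the claim reduces to showing that an even number of them are $\equiv 3 \pmod{4}$.

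The key computational step is a sign-sensitive mod-$4$ evaluation of each odd factor (which neither Lemma~\ref{lem:3.2} nor Lemma~\ref{lem:4.4} supplies directly). Working straight from Corollary~\ref{cor:2.1}, and using that a square is $\equiv 0 \pmod{4}$ or $\equiv 1 \pmod{8}$ according to parity, I would establish two auxiliary facts: (a) if $D_{4}(x_{0},x_{1},x_{2},x_{3})$ is odd, then $D_{4}(x_{0},x_{1},x_{2},x_{3}) \equiv (-1)^{x_{1} + x_{3}} \pmod{4}$; and (b) if $x_{0}, x_{1}, x_{2}, x_{3}$ are all odd and $D_{4}(x_{0},x_{1},x_{2},x_{3}) \in 2^{4}\mathbb{Z}_{\rm odd}$, then $D_{4}(x_{0},x_{1},x_{2},x_{3})/2^{4} \equiv (-1)^{(x_{1}+x_{3})/2} \pmod{4}$; fact (b) comes from writing $x_{0} \pm x_{2}$ and $x_{1} \pm x_{3}$ as twice integers, so Corollary~\ref{cor:2.1} yields $D_{4} = 16(p^{2}-q^{2})(r^{2}+s^{2})$, and reducing the two factors mod $4$. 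In the all-even pattern I set $\bm{b} = 2\bm{\beta}$, $\bm{c} = 2\bm{\gamma}$, $\bm{d} = 2\bm{\delta}$, $\bm{e} = 2\bm{\epsilon}$, so $D_{4}(\bm{b})/2^{4} = D_{4}(\bm{\beta}) \equiv (-1)^{\beta_{1}+\beta_{3}}$ by (a); in the all-odd pattern I apply (b) to each factor.

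Finally I would multiply the four contributions and invoke Remark~\ref{rem:2.4}~(2). In the all-even case the total exponent of $-1$ is $(\beta_{1}+\beta_{3}) + (\gamma_{1}+\gamma_{3}) + (\delta_{1}+\delta_{3}) + (\epsilon_{1}+\epsilon_{3})$, which is even because the halved form $\beta_{i} + \gamma_{i} + \delta_{i} + \epsilon_{i} \equiv 0 \pmod{2}$ of Remark~\ref{rem:2.4}~(2) holds for $i = 1, 3$. In the all-odd case the exponent is $[(b_{1}+c_{1}+d_{1}+e_{1}) + (b_{3}+c_{3}+d_{3}+e_{3})]/2$, and Remark~\ref{rem:2.4}~(2) makes each bracket a multiple of $4$, so the exponent is again even. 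In both surviving cases the product of the four odd numbers is $\equiv 1 \pmod{4}$, which gives $D_{4 \times 2 \times 2}(\bm{a}) \in \{ 2^{16}(4m+1) \mid m \in \mathbb{Z} \}$. I expect the main obstacle to be the careful sign bookkeeping in (a)--(b), and the verification that the two congruences of Remark~\ref{rem:2.4} combine precisely to cancel the sign in each case.
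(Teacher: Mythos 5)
Your proposal is correct and follows essentially the same route as the paper: factor via Corollary~\ref{cor:2.3}, use Lemma~\ref{lem:4.4} to reduce to the all-even and all-odd parity patterns with each factor in $2^{4}\mathbb{Z}_{\rm odd}$, evaluate each odd quotient as $\pm 1 \pmod 4$ with the sign controlled by the $x_{1}+x_{3}$ data, and cancel the signs using Remark~\ref{rem:2.4}~(2). Your fact (a) is exactly the paper's congruence $2^{-4}D_{4}(2k_i,2l_i,2m_i,2n_i)\equiv(-1)^{l_i+n_i}\pmod 4$, and your fact (b) correctly fills in the all-odd case that the paper dispatches with ``in the same way.''
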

\begin{proof}
From Remarks~$\ref{rem:2.2}$ and $\ref{rem:2.4}$~(1) and Lemma~$\ref{lem:4.4}$, 
either one of the following cases holds: 
\begin{enumerate}
\item[(i)] $b_{0} \equiv b_{1} \equiv b_{2} \equiv b_{3} \equiv 0 \pmod{2}$ and $D_{4}(\bm{b}), D_{4}(\bm{c}), D_{4}(\bm{d}), D_{4}(\bm{e}) \in 2^{4} \mathbb{Z}_{{\rm odd}}$; 
\item[(ii)] $b_{0} \equiv b_{1} \equiv b_{2} \equiv b_{3} \equiv 1 \pmod{2}$ and $D_{4}(\bm{b}), D_{4}(\bm{c}), D_{4}(\bm{d}), D_{4}(\bm{e}) \in 2^{4} \mathbb{Z}_{{\rm odd}}$. 
\end{enumerate}
First, we consider the case (i). 
From Remark~$\ref{rem:2.4}$, there exist $k_{i}, l_{i}, m_{i}, n_{i} \in \mathbb{Z}$ satisfying 
\begin{align*}
\bm{b} &= (2 k_{0}, 2 l_{0}, 2 m_{0}, 2 n_{0}), &
\bm{c} &= (2 k_{1}, 2 l_{1}, 2 m_{1}, 2 n_{1}), \\ 
\bm{d} &= (2 k_{2}, 2 l_{2}, 2 m _{2}, 2 n_{2}), &
\bm{e} &= (2 k_{3}, 2 l_{3}, 2 m_{3}, 2 n_{3})
\end{align*}
and $\sum_{i = 0}^{3} k_{i} \equiv \sum_{i = 0}^{3} l_{i} \equiv \sum_{i = 0}^{3} m_{i} \equiv \sum_{i = 0}^{3} n_{i} \equiv 0 \pmod{2}$. 
Here, from Lemma~$\ref{lem:4.4}$, $k_{i} + m_{i} \not\equiv l_{i} + n_{i} \pmod{2}$ holds for any $0 \leq i \leq 3$. 
Thus we have 
\begin{align*}
2^{- 4} D_{4}(2 k_{i}, 2 l_{i}, 2 m_{i}, 2 n_{i}) 
&= \left\{ (k_{i} + m_{i})^{2} - (l_{i} + n_{i})^{2} \right\} \left\{ (k_{i} - m_{i})^{2} + (l_{i} - n_{i})^{2} \right\} \\ 
&\equiv (- 1)^{l_{i} + n_{i}} \pmod{4}. 
\end{align*}
Therefore, 
\begin{align*}
2^{- 16} D_{4 \times 2 \times 2}(\bm{a}) 
&= 2^{- 16} \prod_{i = 0}^{3} D_{4}(2 k_{i}, 2 l_{i}, 2 m_{i}, 2 n_{i}) \\ 
&\equiv \prod_{i = 0}^{3} (- 1)^{l_{i} + n_{i}} \\ 
&\equiv (- 1)^{l_{0} + l_{1} + l_{2} + l_{3}} (- 1)^{n_{0} + n_{1} + n_{2} + n_{3}} \\ 
&\equiv 1 \pmod{4}. 
\end{align*}
In the same way, we can prove for the case (ii). 
\end{proof}

\begin{lem}\label{lem:4.9}
Let $b_{0} + b_{2} \equiv b_{1} + b_{3} \equiv 1 \pmod{2}$. 
If $D_{4 \times 2 \times 2}(\bm{a}) \in \left\{ 2^{16} m \mid m \equiv - 1 \pmod{8} \right\}$, 
then $D_{4 \times 2 \times 2}(\bm{a}) \in \left\{ 2^{16} m \mid m \in A \cup B \right\}$. 
\end{lem}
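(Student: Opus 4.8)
The plan is to run the whole argument through the factorisation $D_{4\times2\times2}(\bm a)=D_4(\bm b)D_4(\bm c)D_4(\bm d)D_4(\bm e)$ together with the product formula of Corollary~\ref{cor:2.1}. Since $b_0+b_2\equiv b_1+b_3\equiv1\pmod2$, Remark~\ref{rem:2.4}~(1) shows $\bm b,\bm c,\bm d,\bm e$ all share this parity pattern, and by Remark~\ref{rem:2.2} I would first normalise so that $\bm b\equiv\bm c\equiv\bm d\equiv\bm e\equiv(0,0,1,1)\pmod2$. As $D_{4\times2\times2}(\bm a)\in2^{16}\mathbb Z_{\rm odd}$, Lemma~\ref{lem:4.4}~(4) forces each of the four factors into $2^4\mathbb Z_{\rm odd}$, i.e.\ the first case of that lemma holds for each; in the variables of Corollary~\ref{cor:2.1} this reads $(x_0+x_2)(x_1+x_3)\equiv\pm3\pmod8$. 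Writing $U_i:=\{(x_0+x_2)^2-(x_1+x_3)^2\}/8$ and $V_i:=\{(x_0-x_2)^2+(x_1-x_3)^2\}/2$ for the $i$-th factor (each odd), the number to be located is $m=\prod_iU_iV_i$, and note $m\equiv-1\pmod8$.

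First I would dispose of the case where $m$ has a prime factor $\equiv\pm3\pmod8$. Because $m\equiv-1\pmod8$, such a prime $p$ satisfies $m/p\equiv5\pmod8$ if $p\equiv3$, and $m/p\equiv3\pmod8$ if $p\equiv5$; either way $m$ is a product of a factor $\equiv3$ and a factor $\equiv5\pmod8$, so $m\in A$.

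So assume $m$ has no prime factor $\equiv\pm3\pmod8$; then every $U_i$ and $V_i$ is $\equiv\pm1\pmod8$. If $x_0\equiv x_1\pmod4$ for some factor, Lemma~\ref{lem:4.5}~(1) would give $V_i\equiv5\pmod8$, impossible; hence $x_0\not\equiv x_1\pmod4$ everywhere and Lemma~\ref{lem:4.5}~(2) gives $V_i\equiv1\pmod8$ for all $i$. Since $2V_i$ is a sum of two odd squares, an odd square being $\equiv1$ or $9\pmod{16}$ according as its base is $\equiv\pm1$ or $\equiv\pm3\pmod8$, the relation $V_i\equiv1\pmod8$ forces the two differences $x_0-x_2$ and $x_1-x_3$ to be \emph{both} $\equiv\pm1$ or \emph{both} $\equiv\pm3\pmod8$. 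If for some $i$ they are both $\equiv\pm3\pmod8$, then $2V_i$ has no prime factor $\equiv\pm3\pmod8$, so Lemma~\ref{lem:4.7} produces a prime factor of $V_i$, hence of $m$, lying in $P'$; this gives $m\in B$.

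The remaining configuration, which is the crux, is that for every factor both differences are $\equiv\pm1\pmod8$, and here I would derive a contradiction. Under $x_1-x_3\equiv\pm1\pmod8$ the identity $(x_1+x_3)+(x_1-x_3)=2x_1$ shows $x_1+x_3\equiv\pm3\pmod8$ if and only if $x_1\equiv2\pmod4$; combined with Lemma~\ref{lem:4.6} (whose two cases are distinguished by whether $x_1+x_3\equiv\pm3\pmod8$) this gives $U_i\equiv-1\pmod8$ exactly when the index-$1$ coordinate of the $i$-th vector is $\equiv2\pmod4$. Consequently $m\equiv(-1)^C\pmod8$, where $C$ is the number of $b_1,c_1,d_1,e_1$ that are $\equiv2\pmod4$. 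Writing these even numbers as $2\tilde b,2\tilde c,2\tilde d,2\tilde e$, the identity $b_1+c_1+d_1+e_1=4a_1$ gives $\tilde b+\tilde c+\tilde d+\tilde e=2a_1$, so an even number of $\tilde b,\tilde c,\tilde d,\tilde e$ are odd; thus $C$ is even and $m\equiv1\pmod8$, contradicting $m\equiv-1\pmod8$. Hence this configuration cannot occur, so a factor of type both-$\pm3$ always exists and $m\in A\cup B$ in every case. The main work is precisely this final parity count; everything before it is just assembling the residues mod $8$ of the factors out of Lemmas~\ref{lem:4.4}--\ref{lem:4.7}.
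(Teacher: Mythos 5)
Your proof is correct and follows essentially the same route as the paper's: the same reduction via Lemma~\ref{lem:4.4}~(4), the same use of Lemmas~\ref{lem:4.5}--\ref{lem:4.7}, and the same parity constraint coming from Remark~\ref{rem:2.4}~(2). The only difference is organizational: the paper directly exhibits a factor whose two differences are both $\equiv\pm3\pmod{8}$ (via the sets $Q'_{1},Q'_{2}$ and the sum of the index-$0$ coordinates), whereas you rule out the all-$\pm1$ configuration by a contradiction using the sum of the index-$1$ coordinates --- the same argument in contrapositive form.
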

\begin{proof}
Let $D_{4 \times 2 \times 2}(\bm{a}) = D_{4}(\bm{b}) D_{4}(\bm{c}) D_{4}(\bm{d}) D_{4}(\bm{e}) = 2^{16} m$ with $m \equiv - 1 \pmod{8}$. 
From Remarks~$\ref{rem:2.2}$ and $\ref{rem:2.4}$~(1), 
we may assume without loss of generality that $\bm{b} \equiv \bm{c} \equiv \bm{d} \equiv \bm{e} \equiv (0, 0, 1, 1) \pmod{2}$. 
We prove that if $m \not\in A$, then $m \in B$. 
Suppose that $m \not\in A$ and let 
\begin{align*}
Q &:= \left\{ (x_{0}, x_{1}, x_{2}, x_{3}) \in \mathbb{Z}^{4} \mid (x_{0}, x_{1}, x_{2}, x_{3}) \equiv (0, 0, 1, 1) \: \: ({\rm mod} \: 2), \right. \\ 
& \qquad \left. (x_{0} + x_{2}) (x_{1} + x_{3}) \equiv \pm 3 \: \: ({\rm mod} \: 8), \: x_{0} \not\equiv x_{1} \: \: ({\rm mod} \: 4) \right\}, \\ 
Q_{1} &:= \left\{ (x_{0}, x_{1}, x_{2}, x_{3}) \in Q \mid x_{0} + x_{2} \equiv \pm 3, \: x_{1} + x_{3} \equiv \pm 1 \: \: ({\rm mod} \: 8) \right\}, \\ 
Q_{2} &:= \left\{ (x_{0}, x_{1}, x_{2}, x_{3}) \in Q \mid x_{0} + x_{2} \equiv \pm 1, \: x_{1} + x_{3} \equiv \pm 3 \: \: ({\rm mod} \: 8) \right\}, \\ 
Q'_{1} &:= \left\{ (x_{0}, x_{1}, x_{2}, x_{3}) \in Q_{1} \mid x_{0} \equiv 0, \: x_{1} \equiv 2 \: \: ({\rm mod} \: 4) \right\}, \\ 
Q'_{2} &:= \left\{ (x_{0}, x_{1}, x_{2}, x_{3}) \in Q_{2} \mid x_{0} \equiv 2, \: x_{1} \equiv 0 \: \: ({\rm mod} \: 4) \right\}. 
\end{align*}
Since $m \not\in A$, $D_{4}(\bm{b}) D_{4}(\bm{c}) D_{4}(\bm{d}) D_{4}(\bm{e})$ has no prime factor of the form $8 k \pm 3$. 
Thus, from Lemmas~$\ref{lem:4.4}$~(4) and $\ref{lem:4.5}$, 
we have $\bm{b}, \bm{c}, \bm{d}, \bm{e} \in Q$. 
Moreover, from $m \equiv - 1 \pmod{8}$ and Lemmas~$\ref{lem:4.5}$ and $\ref{lem:4.6}$, 
either one of the following cases holds: 
\begin{enumerate}
\item[(i)] One of $\bm{b}, \bm{c}, \bm{d}, \bm{e}$ is an element of $Q_{1}$ and the other three are elements of $Q_{2}$; 
\item[(ii)] One of $\bm{b}, \bm{c}, \bm{d}, \bm{e}$ is an element of $Q_{2}$ and the other three are elements of $Q_{1}$. 
\end{enumerate}
Since $b_{0} + c_{0} + d_{0} + e_{0} \equiv 0 \pmod{4}$ from Remark~$\ref{rem:2.4}$~(2), 
we find that in both cases (i) and (ii), 
at least one of $\bm{b}, \bm{c}, \bm{d}, \bm{e}$ is an element of $Q'_{1} \cup Q'_{2}$. 
On the other hand, for any $\bm{x} = (x_{0}, x_{1}, x_{2}, x_{3}) \in Q'_{1}$, 
we have $- x_{0} + x_{2} \equiv x_{0} + x_{2} \equiv \pm 3, \: - x_{1} + x_{3} \equiv x_{1} + x_{3} - 4 \equiv \pm 3 \: \: ({\rm mod} \: 8)$. 
Thus, it follows from Lemma~$\ref{lem:4.7}$ that for any $\bm{x} \in Q'_{1}$, 
if $D_{4}(\bm{x})$ has no prime factor of the form $8 k \pm 3$, 
then $D_{4}(\bm{x})$ has at least one prime factor of the form $p = a^{2} + b^{2} \equiv 1 \pmod{8}$ with $a + b \equiv \pm 3 \pmod{8}$. 
In the same way, we can obtain the same conclusion for any $\bm{x} \in Q'_{2}$. 
From the above, we have $m \in B$. 
\end{proof}

\begin{proof}[Proof of Lemma~$\ref{lem:4.2}$]
Suppose that $D_{4 \times 2 \times 2}(\bm{a}) = D_{4}(\bm{b}) D_{4}(\bm{c}) D_{4}(\bm{d}) D_{4}(\bm{e}) \in 2^{16} \mathbb{Z}_{{\rm odd}}$. 
From Corollary~$\ref{cor:2.1}$ and Lemma~$\ref{lem:2.5}$, 
we have $b_{0} + b_{2} \equiv b_{1} + b_{3} \pmod{2}$. 
Therefore, from Lemmas~$\ref{lem:4.8}$ and $\ref{lem:4.9}$, 
we have 
$$
D_{4 \times 2 \times 2}(\bm{a}) \in \left\{ 2^{16} (4 m + 1), \: 2^{16} (8 m + 3), \: 2^{16} m' \mid m \in \mathbb{Z}, m' \in A \cup B \right\}. 
$$
\end{proof}

To prove Lemma~$\ref{lem:4.3}$, 
we use the following lemma. 

\begin{lem}\label{lem:4.10}
Suppose that $\bm{x} = (x_{0}, x_{1}, x_{2}, x_{3}) \equiv (0, 0, 1, 1) \pmod{2}$, 
$(x_{0} + x_{2}) (x_{1} + x_{3}) \equiv \pm 1 \pmod{8}$ and $x_{0} \not\equiv x_{1} \pmod{4}$. 
Then $D_{4}(\bm{x})$ has at least one prime factor of the form $8 k - 3$. 
\end{lem}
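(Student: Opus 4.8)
The plan is to work with the factorization from Corollary~\ref{cor:2.1},
$$
D_{4}(\bm{x}) = \left\{ (x_{0} + x_{2})^{2} - (x_{1} + x_{3})^{2} \right\} \left\{ (x_{0} - x_{2})^{2} + (x_{1} - x_{3})^{2} \right\},
$$
and to locate the required prime factor inside the second factor $F := (x_{0} - x_{2})^{2} + (x_{1} - x_{3})^{2}$ alone. Since $F \mid D_{4}(\bm{x})$, it suffices to show that $F$ has a prime factor of the form $8k - 3$ (equivalently $\equiv 5 \pmod{8}$). Because $x_{0}, x_{1}$ are even and $x_{2}, x_{3}$ are odd, both $x_{0} - x_{2}$ and $x_{1} - x_{3}$ are odd, so $F \equiv 2 \pmod{8}$ and, crucially, $F$ is a sum of two squares.

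First I would pin down $F$ modulo $16$. Using the identity $(x_{0} - x_{2})(x_{1} - x_{3}) = (x_{0} + x_{2})(x_{1} + x_{3}) - 2(x_{0} x_{3} + x_{1} x_{2})$ together with the fact that $x_{2}, x_{3}$ are odd, one checks $x_{0} x_{3} \equiv x_{0}$ and $x_{1} x_{2} \equiv x_{1} \pmod{4}$, so $x_{0} x_{3} + x_{1} x_{2} \equiv x_{0} + x_{1} \pmod{4}$. The hypothesis $x_{0} \not\equiv x_{1} \pmod{4}$ forces $x_{0} + x_{1} \equiv 2 \pmod{4}$ (both are even), whence $-2(x_{0} x_{3} + x_{1} x_{2}) \equiv 4 \pmod{8}$. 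Combined with $(x_{0} + x_{2})(x_{1} + x_{3}) \equiv \pm 1 \pmod{8}$, this gives $(x_{0} - x_{2})(x_{1} - x_{3}) \equiv \pm 1 + 4 \equiv \pm 3 \pmod{8}$. Since odd squares are $\equiv 1$ or $9 \pmod{16}$, a product of two odd numbers $\equiv \pm 3 \pmod{8}$ means exactly one of them is $\equiv \pm 1$ and the other $\equiv \pm 3 \pmod{8}$; hence $F \equiv 1 + 9 \equiv 10 \pmod{16}$, i.e. $F/2 \equiv 5 \pmod{8}$.

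Then I would finish by contradiction: suppose $F$ has no prime factor $\equiv 5 \pmod{8}$. As $F$ is a sum of two squares, every prime $\equiv 3 \pmod{4}$ occurs in $F$ to an even power, and $F = 2m$ with $m$ odd (since $F \equiv 2 \pmod{8}$). Writing $m = \prod p_{i}^{\alpha_{i}} \prod q_{j}^{2\beta_{j}}$ with $p_{i} \equiv 1 \pmod{4}$ and $q_{j} \equiv 3 \pmod{4}$, the assumption forces every $p_{i} \equiv 1 \pmod{8}$, while $q_{j}^{2} \equiv 1 \pmod{8}$ automatically; therefore $m \equiv 1 \pmod{8}$, contradicting $F/2 = m \equiv 5 \pmod{8}$. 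Hence $F$, and a fortiori $D_{4}(\bm{x})$, has a prime factor of the form $8k - 3$.

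The routine content is the congruences modulo $8$ and $16$; the only genuinely structural input is the sum-of-two-squares theorem (every prime $\equiv 3 \pmod{4}$ divides $F$ to an even power, and $v_{2}(F) = 1$). The main point to get right is the uniform evaluation $(x_{0} - x_{2})(x_{1} - x_{3}) \equiv \pm 3 \pmod{8}$, which is the $\pm 1$-analogue of Lemma~\ref{lem:4.5} and is precisely what flips $F/2$ from $1$ to $5$ modulo $8$.
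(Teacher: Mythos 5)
Your proof is correct and follows essentially the same route as the paper: both establish $(x_{0}-x_{2})(x_{1}-x_{3})\equiv \pm 3 \pmod{8}$ from the hypothesis $x_{0}\not\equiv x_{1}\pmod{4}$, deduce $(x_{0}-x_{2})^{2}+(x_{1}-x_{3})^{2}\equiv 10\pmod{16}$, and conclude via the sum-of-two-squares factorization. The only difference is that you spell out the final step (that a sum of two odd squares with odd part $\equiv 5\pmod 8$ must have a prime factor $\equiv 5\pmod 8$), which the paper leaves implicit.
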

\begin{proof}
From 
\begin{align*}
(x_{0} - x_{2}) (x_{1} - x_{3}) 
&= (x_{0} + x_{2}) (x_{1} + x_{3}) - 2 x_{0} x_{3} - 2 x_{2} x_{1} \\ 
&\equiv (x_{0} + x_{2}) (x_{1} + x_{3}) - 2 x_{0} x_{3} - 2 x_{2} (x_{0} + 2) \\ 
&\equiv (x_{0} + x_{2}) (x_{1} + x_{3}) - 2 x_{0} (x_{3} + x_{2}) - 4 x_{2} \\ 
&\equiv \pm 3 \pmod{8}, 
\end{align*}
we have $(x_{0} - x_{2})^{2} + (x_{1} - x_{3})^{2} \equiv - 6 \pmod{16}$. 
This completes the proof. 
\end{proof}

\begin{proof}[Proof of Lemma~$\ref{lem:4.3}$]
Suppose that $D_{4 \times 2 \times 2}(\bm{a}) = D_{4}(\bm{b}) D_{4}(\bm{c}) D_{4}(\bm{d}) D_{4}(\bm{e}) \in 2^{17} \mathbb{Z}_{{\rm odd}}$. 
Then, from Corollary~$\ref{cor:2.1}$ and Lemma~$\ref{lem:2.5}$, 
we have $b_{0} + b_{2} \equiv b_{1} + b_{3} \pmod{2}$. 
Therefore, from Remarks~$\ref{rem:2.2}$ and $\ref{rem:2.4}$~(1) and Lemma~$\ref{lem:4.4}$, 
we have $b_{0} + b_{2} \equiv b_{1} + b_{3} \equiv 1 \pmod{2}$. 
We may assume without loss of generality that $\bm{b} \equiv \bm{c} \equiv \bm{d} \equiv \bm{e} \equiv (0, 0, 1, 1) \pmod{2}$. 
Let 
\begin{align*}
Q_{3} &:= \left\{ (x_{0}, x_{1}, x_{2}, x_{3}) \in \mathbb{Z}^{4} \mid (x_{0}, x_{1}, x_{2}, x_{3}) \equiv (0, 0, 1, 1) \: \: ({\rm mod} \: 2), \right. \\ 
&\qquad \left. (x_{0} + x_{2}) (x_{1} + x_{3}) \equiv \pm 3 \: \: ({\rm mod} \: 8) \right\}, \\ 
Q_{4} &:= \left\{ (x_{0}, x_{1}, x_{2}, x_{3}) \in \mathbb{Z}^{4} \mid (x_{0}, x_{1}, x_{2}, x_{3}) \equiv (0, 0, 1, 1) \: \: ({\rm mod} \: 2), \right. \\ 
&\qquad \left. (x_{0} + x_{2}) (x_{1} + x_{3}) \equiv \pm 1 \: \: ({\rm mod} \; 8) \right\}, \\ 
Q'_{3} &:= \left\{ (x_{0}, x_{1}, x_{2}, x_{3}) \in Q_{3} \mid x_{0} \equiv x_{1} \: \: ({\rm mod} \: 4) \right\}, \\ 
Q'_{4} &:= \left\{ (x_{0}, x_{1}, x_{2}, x_{3}) \in Q_{4} \mid x_{0} \not\equiv x_{1} \: \: ({\rm mod} \: 4) \right\}. 
\end{align*}
From Lemma~$\ref{lem:4.4}$, 
three of $\bm{b}, \bm{c}, \bm{d}, \bm{e}$ are elements of $Q_{3}$ and the other one is an element of $Q_{4}$. 
Moreover, since $(b_{0} - b_{1}) + (c_{0} - c_{1}) + (d_{0} - d_{1}) + (e_{0} - e_{1}) \equiv 0 \pmod{4}$ from Remark~$\ref{rem:2.4}$~(2), 
we find that at least one of $\bm{b}, \bm{c}, \bm{d}, \bm{e}$ is an element of $Q'_{3} \cup Q'_{4}$. 
On the other hand, it follows from Lemmas~$\ref{lem:4.5}$ and $\ref{lem:4.10}$ that for any $\bm{x} \in Q'_{3} \cup Q'_{4}$, 
$D_{4}(\bm{x})$ has at least one prime factor of the form $8 k - 3$. 
From the above, there exist $p \in P$ and $m \in \mathbb{Z}$ satisfying $D_{4 \times 2 \times 2}(\bm{a}) = 2^{17} p (2 m + 1)$. 
\end{proof}

\section{Possible numbers}
In this section, 
we determine all possible numbers. 
Lemmas~$\ref{lem:3.1}$ and $\ref{lem:4.1}$--$\ref{lem:4.3}$ imply that $S \left( {\rm C}_{4} \times {\rm C}_{2}^{2} \right)$ does not include every integer that is not mentioned in the following Lemmas~$\ref{lem:5.1}$--$\ref{lem:5.3}$. 

\begin{lem}\label{lem:5.1}
For any $m \in \mathbb{Z}$, 
the following are elements of $S({\rm C}_{4} \times {\rm C}_{2}^{2})$: 
\begin{enumerate}
\item[$(1)$] $16 m + 1$; 
\item[$(2)$] $2^{16} (4 m + 1)$; 
\item[$(3)$] $2^{16} (4 m + 1) (8 n + 3)$; 
\item[$(4)$] $2^{18} (2 m + 1)$; 
\item[$(5)$] $2^{18} (2 m)$. 
\end{enumerate}
\end{lem}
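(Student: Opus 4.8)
The plan is to prove each membership constructively: for a target value $v$ I exhibit integers $a_0,\dots,a_{15}$ with $D_{4\times2\times2}(\bm a)=v$, which by Corollary~\ref{cor:2.3} amounts to prescribing the four tuples $\bm b,\bm c,\bm d,\bm e$ and using $D_{4\times2\times2}(\bm a)=D_4(\bm b)D_4(\bm c)D_4(\bm d)D_4(\bm e)$. The preliminary observation I would record is that the assignment $\bm a\mapsto(\bm b,\bm c,\bm d,\bm e)$ is invertible over $\mathbb{Z}$ exactly onto the quadruples allowed by Remark~\ref{rem:2.4}: from $4a_i=b_i+c_i+d_i+e_i$, $4a_{i+4}=b_i-c_i+d_i-e_i$, $4a_{i+8}=b_i+c_i-d_i-e_i$ and $4a_{i+12}=b_i-c_i-d_i+e_i$ one checks that every choice with $b_i\equiv c_i\equiv d_i\equiv e_i\pmod 2$ and $b_i+c_i+d_i+e_i\equiv 0\pmod 4$ returns an integer vector $\bm a$. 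Hence it suffices to pick $\bm b,\bm c,\bm d,\bm e$ subject only to these congruences, and the task becomes one of controlling the four factors through Corollary~\ref{cor:2.1}, which writes each $D_4$ as (an indefinite form) $(x_0+x_2)^2-(x_1+x_3)^2$ times (a definite form) $(x_0-x_2)^2+(x_1-x_3)^2$.

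I would organise the proof by the common parity pattern $\bm\pi\in\{0,1\}^4$ that Remark~\ref{rem:2.4}(1) forces on the four tuples, since $\bm\pi$ governs the $2$-adic valuation of each factor via Lemma~\ref{lem:4.4}. For the odd value~(1) I use a pattern with an odd number of odd coordinates, so every factor is odd: concretely $\bm b=(4m+1,4m,4m,4m)$ and $\bm c=\bm d=\bm e=(1,0,0,0)$ give $D_4(\bm b)=(8m+1)^2-(8m)^2=16m+1$ and $D_4(\bm c)=D_4(\bm d)=D_4(\bm e)=1$; this is the vector $a_0=m+1$, all other $a_i=m$. For the $2^{16}\mathbb{Z}_{\rm odd}$ values (2) and (3) I force each factor into $2^4\mathbb{Z}_{\rm odd}$, which by Lemma~\ref{lem:4.4} is available only for the patterns $(0,0,0,0)$, $(1,1,1,1)$ and $(0,0,1,1)$; for the last of these Lemmas~\ref{lem:4.5} and~\ref{lem:4.6} pin down exactly which odd residues the definite and indefinite forms realise. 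I then distribute the free factor $4m+1$ (in~(2)) and the two prescribed factors $4m+1$ and $8n+3$ (in~(3)) among one or two of the four $D_4$'s, keeping the remaining factors equal to $\pm 2^4$, and use Remark~\ref{rem:2.2} to permute coordinates so the shared-parity constraint across all indices is met.

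For the $2^{18}$ values I raise the valuation by moving to the higher branches of Lemma~\ref{lem:4.4}. Under the pattern $(0,0,1,1)$ each factor has valuation $1+v_2(\text{indefinite})$ with $v_2(\text{indefinite})\ge 3$, so the possible exact valuations per factor are $4,5,6,\dots$; for~(4), $2^{18}(2m+1)$, I take two factors of valuation $4$ and two of valuation $5$ (total $18$) with odd parts multiplying to the arbitrary odd number $2m+1$, and for~(5), $2^{18}(2m)=2^{19}m$, I realise every multiple of $2^{19}$ either by making the total valuation exactly $19$ (the case $m$ odd) or by letting one factor sit in one of the strictly-$2^{5}\mathbb{Z}$ or $2^{8}\mathbb{Z}$ branches to absorb arbitrary further powers of $2$ (the case $m$ even, and $m=0$ by taking a vanishing factor).

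The hard part will be surjectivity rather than mere existence: I must show that the chosen family of tuples realises \emph{every} value in each listed set, for all integers $m,n$ and both signs, while the shared parity pattern and the mod-$4$ sum condition of Remark~\ref{rem:2.4} are respected at all four indices simultaneously. This forces the representation equations — expressing a prescribed odd number as a difference of two squares, and controlling which sums of two squares (hence which residues mod $8$) occur in the definite form — to be solved inside the constrained lattices, and a naive choice tends to fix an unwanted parity of a coordinate. Item~(3) is the most delicate, since its two independent parameters require me to pin down two of the four factors at once, one carrying a free factor $\equiv 1\pmod 4$ and the other a free factor $\equiv 3\pmod 8$, without disturbing the unit status of the remaining two; and items~(4)–(5) require careful valuation bookkeeping to guarantee the total power of $2$ is exactly what is claimed.
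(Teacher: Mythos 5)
Your approach is the paper's approach: exhibit explicit integer vectors $\bm{a}$ and read off the product $D_{4}(\bm{b})D_{4}(\bm{c})D_{4}(\bm{d})D_{4}(\bm{e})$. Your construction for (1) is literally the paper's ($a_{0}=m+1$, all other $a_{i}=m$), and your per-factor valuation targets for (2)--(5) coincide with what the paper actually uses: for (2) one factor $2^{4}(4m+1)$ and three factors $\pm 2^{4}$; for (3) factors $2^{4}(4m+1)$, $2^{4}(8n+3)$ and two factors $\pm2^{4}$; for (4) valuations $5+5+4+4$; for (5) one factor of the form $2^{6}m$ absorbing the whole parameter. Your reduction via the invertibility of $\bm{a}\mapsto(\bm{b},\bm{c},\bm{d},\bm{e})$ onto the quadruples permitted by Remark~\ref{rem:2.4} is sound and is implicitly what the paper does. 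The one substantive shortfall is that for (2)--(5) the entire content of the proof \emph{is} the explicit witnesses, and your proposal defers them; the ``hard part'' you anticipate (surjectivity onto each listed set under the parity and mod-$4$ constraints) in fact evaporates, because the paper's witnesses are affine in $m$ (and $n$), e.g.\ $\bm{b}=(4m+1,4m+1,4m+2,4m)$, $\bm{c}=\bm{d}=\bm{e}=(1,1,2,0)$ for (2), so every value in the set is hit automatically and no case split on the parity of $m$ (as you suggest for (5)) is needed. Completing your outline is therefore routine, but it does still need to be done.
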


\begin{lem}\label{lem:5.2}
For any $p \in P'$ and $m \in \mathbb{Z}$, 
we have $2^{16} p (4 m - 1) \in S({\rm C}_{4} \times {\rm C}_{2}^{2})$. 
\end{lem}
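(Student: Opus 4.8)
The plan is to construct an explicit integer vector $\bm{a}$ whose four derived vectors $\bm{b}, \bm{c}, \bm{d}, \bm{e}$ realize $D_{4}(\bm{b}) D_{4}(\bm{c}) D_{4}(\bm{d}) D_{4}(\bm{e}) = 2^{16} p (4 m - 1)$. The key observation is that the map $\bm{a} \mapsto (\bm{b}, \bm{c}, \bm{d}, \bm{e})$ is invertible over $\mathbb{Q}$ with $4 a_{i} = b_{i} + c_{i} + d_{i} + e_{i}$ and the analogous formulas for $a_{i + 4}, a_{i + 8}, a_{i + 12}$; hence \emph{any} integer tuple $(\bm{b}, \bm{c}, \bm{d}, \bm{e})$ satisfying the two congruences of Remark~$\ref{rem:2.4}$ for each $i$ arises from a genuine integer $\bm{a}$. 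So it suffices to exhibit four integer vectors with the right product, sharing a common parity pattern mod $2$ and satisfying $b_{i} + c_{i} + d_{i} + e_{i} \equiv 0 \pmod{4}$ for $i = 0, 1, 2, 3$. Throughout I use the factorization $D_{4}(\bm{x}) = \left\{ (x_{0} + x_{2})^{2} - (x_{1} + x_{3})^{2} \right\} \left\{ (x_{0} - x_{2})^{2} + (x_{1} - x_{3})^{2} \right\}$ from Corollary~$\ref{cor:2.1}$, and I work with the pattern $\bm{b} \equiv \bm{c} \equiv \bm{d} \equiv \bm{e} \equiv (0, 0, 1, 1) \pmod{2}$, for which each factor lands in $2^{4} \mathbb{Z}_{\rm odd}$ (the difference-of-odd-squares part contributes $8 \cdot (\text{odd})$ and the sum-of-two-odd-squares part contributes $2 \cdot (\text{odd})$).

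First I would record the arithmetic of $p \in P'$. Since $p \equiv 1 \pmod{8}$, I may write $p = g^{2} + h^{2}$ with $g$ odd and $h \equiv 0 \pmod{4}$, and with $g + h \equiv \pm 3 \pmod{8}$. Setting $\alpha := g + h$ and $\beta := g - h$, both are odd, $\alpha^{2} + \beta^{2} = 2 p$, and because $2 h \equiv 0 \pmod{8}$ we get $\alpha \equiv \beta \equiv \pm 3 \pmod{8}$. Then I realize the prime through the sum-of-squares part of $\bm{b}$: choosing $(b_{0} - b_{2}, b_{1} - b_{3}) = (\alpha, \beta)$ makes that part equal to $2 p$, and choosing $(b_{0} + b_{2}, b_{1} + b_{3})$ among $\{ \pm 3, \pm 1 \}$ with $(b_{0}+b_{2})^{2} - (b_{1}+b_{3})^{2} = 8$ gives $D_{4}(\bm{b}) = 2^{4} p$. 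The free parameter is carried by the difference-of-squares part of $\bm{e}$: taking $(e_{0} + e_{2}, e_{1} + e_{3}) = (8 m - 1, 8 m - 3)$ yields $(e_{0}+e_{2})^{2} - (e_{1}+e_{3})^{2} = 8 (4 m - 1)$, and with $(e_{0} - e_{2}, e_{1} - e_{3}) = (1, -1)$ the sum-of-squares part is $2$, so $\bm{e} = (4 m, 4 m - 2, 4 m - 1, 4 m - 1)$ and $D_{4}(\bm{e}) = 2^{4}(4 m - 1)$. The two remaining vectors $\bm{c}, \bm{d}$ are fillers with $D_{4} = 2^{4}$, so that the product is $2^{16} p (4 m - 1)$.

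The hard part is the integrality bookkeeping, i.e. forcing $b_{i} + c_{i} + d_{i} + e_{i} \equiv 0 \pmod{4}$ for all four $i$ simultaneously. Because recovering the components involves halving $(u \pm s)$, these conditions are genuinely congruences mod $8$ on $\alpha, \beta$ and on the parameter, and the filler factors of odd part $1$ are rigid (forced to $(b_{0}+b_{2})^{2} - (b_{1}+b_{3})^{2} = 8$, sum-of-squares $= 2$), so they cannot fix every residue by themselves. I would resolve this by casework on $\alpha \equiv \beta \equiv 3 \pmod{8}$ versus $\equiv 5 \pmod{8}$, in each case pinning down the signs of $(b_{0}+b_{2}, b_{1}+b_{3})$ from the pattern condition, computing $\bm{b} \pmod 4$ (for $\alpha \equiv \beta \equiv 3$ one gets $\bm{b} \equiv (0, 2, 1, 3)$, and $\bm{e} \equiv (0, 2, 3, 3)$), and then selecting the filler signs accordingly. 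For the first case $\bm{c} = (2, 0, 1, -1)$, $\bm{d} = (-2, 0, -1, -1)$ works, and the second case is handled by the analogous sign flip $\bm{d} = (2, 0, 1, 1)$; in both, a direct check gives each $b_{i} + c_{i} + d_{i} + e_{i} \equiv 0 \pmod{4}$. One must also track the overall sign, keeping every $D_{4}$ factor positive (using Remark~$\ref{rem:2.2}$ if a cyclic shift is needed) so the product is $+ 2^{16} p (4 m - 1)$ rather than its negative. With Remark~$\ref{rem:2.4}$ verified, the corresponding integer $\bm{a}$ exists and $D_{4 \times 2 \times 2}(\bm{a}) = 2^{16} p (4 m - 1)$, proving $2^{16} p (4 m - 1) \in S({\rm C}_{4} \times {\rm C}_{2}^{2})$.
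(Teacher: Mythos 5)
Your proposal is correct and takes essentially the same approach as the paper: both realize $2^{16} p (4 m - 1)$ as a product of four $D_{4}$ factors by writing $2 p$ as a sum of two odd squares each $\equiv \pm 3 \pmod{8}$ and placing it in the sum-of-two-squares part of one factor, putting $8 (4 m - 1)$ in the difference-of-squares part of another, and using two filler factors equal to $2^{4}$. The only real difference is presentational: the paper exhibits the $16$-entry vector $\bm{a}$ directly, while you work at the level of $(\bm{b}, \bm{c}, \bm{d}, \bm{e})$ and lift back via the (correct and easily verified) converse of Remark~\ref{rem:2.4}; your mod-$4$ casework on $\alpha \equiv \beta \equiv \pm 3 \pmod{8}$ and the explicit fillers both check out.
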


\begin{lem}\label{lem:5.3}
For any $p \in P$ and $m \in \mathbb{Z}$, 
we have $2^{17} p (2 m + 1) \in S({\rm C}_{4} \times {\rm C}_{2}^{2})$. 
\end{lem}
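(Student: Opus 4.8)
The plan is to prove Lemma~\ref{lem:5.3} by an explicit construction: I will exhibit integers $a_0,\dots,a_{15}$ whose four factors $D_4(\bm b),D_4(\bm c),D_4(\bm d),D_4(\bm e)$ multiply to $2^{17}p(2m+1)$. The crucial reduction is that the linear map $\bm a\mapsto(\bm b,\bm c,\bm d,\bm e)$ is, at each index $i$, the Hadamard-type transformation sending $(a_i,a_{i+4},a_{i+8},a_{i+12})$ to $(b_i,c_i,d_i,e_i)$; since this matrix squares to $4I$, a quadruple $(\bm b,\bm c,\bm d,\bm e)\in(\mathbb Z^4)^4$ comes from an integer $\bm a$ \emph{exactly} when the conditions of Remark~\ref{rem:2.4} hold, i.e.\ for every $i$ the entries $b_i,c_i,d_i,e_i$ share a common parity and satisfy $b_i+c_i+d_i+e_i\equiv0\pmod4$. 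Thus it suffices to prescribe $\bm b,\bm c,\bm d,\bm e$ realizing the correct product and then verify these compatibility conditions.

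For the product I will split the valuation as $17=4+4+4+5$, working throughout in the pattern $\bm b\equiv\bm c\equiv\bm d\equiv\bm e\equiv(0,0,1,1)\pmod2$, so that each $D_4(\bm x)=\{(x_0+x_2)^2-(x_1+x_3)^2\}\{(x_0-x_2)^2+(x_1-x_3)^2\}$ is a difference of two odd squares (divisible by $8$) times a sum of two odd squares (twice an odd number). Writing $p=g^2+h^2$ with $g$ odd and $h$ even (possible since $p\equiv1\pmod4$), I make one factor, say $D_4(\bm b)$, have sum-of-squares part $(g+h)^2+(g-h)^2=2p$ and difference part $\pm8$, so $D_4(\bm b)=\pm2^4p$; I let $D_4(\bm c)$ carry the free odd part via a difference of squares $(x_0+x_2)^2-(x_1+x_3)^2=\pm8(2m+1)$ with sum-of-squares part $2$, so $D_4(\bm c)=\pm2^4(2m+1)$; and I take $D_4(\bm d)=\pm2^4$, $D_4(\bm e)=\pm2^5$ as units, fixing the four signs so the product is $+2^{17}p(2m+1)$. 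Each individual value is realized by an explicit small tuple, so all the content lies in making the four tuples fit together.

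The main work, and the main obstacle, is the compatibility check. Parametrizing each tuple by $(u,u',v,v')=(x_0+x_2,\,x_0-x_2,\,x_1+x_3,\,x_1-x_3)$, the $(0,0,1,1)$ pattern forces $u+u'\equiv v+v'\equiv0\pmod4$, while $D_4$ depends only on $u^2,v^2,u'^2,v'^2$; hence the freedom preserving both $D_4$ and the pattern that I will exploit is to negate the pairs $(u,u')$ and $(v,v')$ independently in each tuple. The conditions of Remark~\ref{rem:2.4} then become the requirement that the signed sums $\sum u_t$, $\sum u'_t$, $\sum v_t$, $\sum v'_t$ all vanish $\pmod8$, which decouples into a system in the four $(u,u')$-signs and a system in the four $(v,v')$-signs. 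I expect the delicate point to be that solvability of these systems is \emph{exactly} the arithmetic input $p\equiv5\pmod8$: for such $p$ precisely one of $g+h,\,g-h$ is $\equiv\pm3$ and the other $\equiv\pm1\pmod8$, and this dichotomy is what lets both congruences be satisfied; the identical computation shows infeasibility when $p\equiv1\pmod8$ lies outside $P'$, consistent with the necessity in Lemma~\ref{lem:4.3}. I anticipate a short case split according to $m\bmod2$ (equivalently, realizing the unit factors as $+2^4,+2^5$ versus $-2^4,-2^5$), exhibiting in each case explicit signs that solve both systems; a sample check at $p=5$ with $m=0$ and $m=1$ confirms the scheme yields genuine $16$-tuples with $D_{4\times2\times2}(\bm a)=2^{17}p(2m+1)$.
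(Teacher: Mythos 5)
Your overall strategy---build four factors $D_{4}(\bm{b}),\dots,D_{4}(\bm{e})$ in the pattern $(0,0,1,1)\bmod 2$, put the sum of two squares $2p=(8k+3)^{2}+(8l+1)^{2}$ into one factor and units into the rest, then pull back to $\bm{a}$---is essentially the paper's, and your criterion for when a quadruple $(\bm{b},\bm{c},\bm{d},\bm{e})$ descends to an integral $\bm{a}$ (common parity in each coordinate plus $b_{i}+c_{i}+d_{i}+e_{i}\equiv 0\pmod 4$) is correct. But there is a genuine gap exactly where you flag ``the main work'': the signs are never exhibited, and the sign system as you set it up cannot do the whole job. In the pattern $(0,0,1,1)$ the coordinates $x_{0},x_{1}$ are even, so negating the pair $(u_{t},u'_{t})=(x_{0}+x_{2},x_{0}-x_{2})$ changes $\sum_{t}x_{0}^{(t)}$ by $-2x_{0}^{(t)}\equiv 0\pmod 4$; hence the two compatibility conditions $\sum_{t}x_{0}^{(t)}\equiv\sum_{t}x_{1}^{(t)}\equiv 0\pmod 4$ are \emph{invariant} under all of your flips and must be arranged by the initial choice of representatives, which you do not address. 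Concretely, for $p=5$, $m=0$ the natural representatives $\bm{b}=(0,0,3,1)$, $\bm{c}=\bm{d}=(2,0,1,1)$, $\bm{e}=(2,2,3,1)$ have product $2^{17}\cdot 5$ but $\sum_{t}x_{0}^{(t)}=6\equiv 2\pmod 4$, and no combination of your sign flips repairs this; switching a unit tuple to a shape with $x_{0}\equiv 0\pmod 4$, such as $(0,2,1,1)$, flips the sign of its $D_{4}$ and forces a compensating change elsewhere. So the ``short case split'' is a real multi-constraint verification (residues of $g\pm h$ mod $8$, of $4m+1,4m+3$ mod $8$, four mod-$4$ coordinate sums, and the overall sign) that remains to be done; a spot check at $p=5$, $m=0,1$ does not establish it.

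The paper avoids all of this by running the construction forwards: it writes down an explicit $16$-tuple $\bm{a}$ as affine functions of $m$ and of $k,l$ with $2p=(8k+3)^{2}+(8l+1)^{2}$, so integrality is automatic and the only verification is the computation
$D_{4}(4m+4l+3,4m+4k+3,4m-4l+2,4m-4k)\,D_{4}(1,1,2,0)^{2}\,D_{4}(-1,-1,-2,0)=2^{17}p(2m+1)$;
note it also places $p$ and $2m+1$ in the \emph{same} factor (difference-of-squares part $16(2m+1)$, sum-of-squares part $2p$), dispensing with your separate $\pm 2^{4}(2m+1)$ and $\pm 2^{5}$ factors. The cleanest fix for your route is to do likewise: specify the tuples as explicit affine functions of $k,l,m$ rather than as abstract representatives modified by signs.
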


\begin{rem}\label{rem:5.4}
From Lemma~$\ref{lem:5.1}$~$(3)$, 
we have $2^{16} m \in S({\rm C}_{4} \times {\rm C}_{2}^{2})$ for any $m \in A$. 
Also, from Lemma~$\ref{lem:5.2}$, 
we have $2^{16} m \in S({\rm C}_{4} \times {\rm C}_{2}^{2})$ for any $m \in B$. 
\end{rem}

\begin{proof}[Proof of Lemma~$\ref{lem:5.1}$]
We obtain (1) from 
\begin{align*}
D_{4 \times 2 \times 2}(m + 1, m, \ldots, m) 
&= D_{4}(4 m + 1, 4 m, 4 m, 4 m) D_{4}(1, 0, 0, 0)^{3} \\ 
&= (8 m + 1)^{2} - (8 m)^{2} \\ 
&= 16 m + 1. 
\end{align*}
We obtain (2) from 
\begin{align*}
D_{4 \times 2 \times 2}(m + 1, m + 1, m + 2, m, \ldots, m) 
&= D_{4}(4 m + 1, 4 m + 1, 4 m + 2, 4 m) D_{4}(1, 1, 2, 0)^{3} \\ 
&= 2 \left\{ (8 m + 3)^{2} - (8 m + 1)^{2} \right\} \cdot (2^{4})^{3} \\ 
&= 2^{13} (32 m + 8) \\ 
&= 2^{16} (4 m + 1). 
\end{align*}
We obtain (3) from 
\begin{align*}
&D_{4 \times 2 \times 2}(\overbrace{m + n + 1, \ldots, m + n + 1}^{4}, \overbrace{m - n, \ldots, m - n}^{4}, \\ 
&\qquad m + n + 1, m + n, m + n + 1, m + n - 1, m - n - 1, m - n, m - n, m - n) \\ 
&= D_{4}(4 m + 1, 4 m + 1, 4 m + 2, 4 m) D_{4}(4 n + 3, 4 n + 1, 4 n + 2, 4 n) D_{4}(1, 1, 0, 2) D_{4}(- 1, 1, 0, 2) \\ 
&= 2 \left\{ (8 m + 3)^{2} - (8 m + 1)^{2} \right\} \cdot 2 \left\{ (8 n + 5)^{2} - (8 n + 1)^{2} \right\} \cdot (- 2^{4}) \cdot (- 2^{4}) \\ 
&= 2^{10} (32 m + 8) (64 n + 24) \\ 
&= 2^{16} (4 m + 1) (8 n + 3). 
\end{align*}
We obtain (4) from 
\begin{align*}
&D_{4 \times 2 \times 2}(m + 2, m, m + 2, m + 1, m, m, m, m, m + 1, m + 1, m, m + 1, m, m, m, m) \\ 
&\quad= D_{4}(4 m + 3, 4 m + 1, 4 m + 2, 4 m + 2) D_{4}(3, 1, 2, 2) D_{4}(1, - 1, 2, 0)^{2} \\ 
&\quad= 2 \left\{ (8 m + 5)^{2} - (8 m + 3)^{2} \right\} \cdot 2^{5} \cdot (2^{4})^{2} \\ 
&\quad= 2^{14} (32 m + 16) \\ 
&\quad= 2^{18} (2 m + 1). 
\end{align*}
We obtain (5) from 
\begin{align*}
&D_{4 \times 2 \times 2}(m + 1, m, m + 1, m, m, m - 1, m, m, m, m + 1, m, m + 1, m, m - 1, m - 1, m - 1) \\ 
&\quad= D_{4}(4 m + 1, 4 m - 1, 4 m, 4 m) D_{4}(1, 3, 2, 2) D_{4}(1, - 1, 2, 0) D_{4}(1, - 1, 0, - 2) \\ 
&\quad= 2 \left\{ (8 m + 1)^{2} - (8 m - 1)^{2} \right\} \cdot (- 2^{5}) \cdot 2^{4} \cdot (- 2^{4}) \\ 
&\quad= 2^{14} (32 m) \\ 
&\quad= 2^{18} (2 m). 
\end{align*}
\end{proof}

\begin{proof}[Proof of Lemma~$\ref{lem:5.2}$]
For any $p \in P'$, 
there exist $r, s \in \mathbb{Z}$ with $r \not\equiv s \pmod{2}$ satisfying $p = (4 r)^{2} + (4 s + 1)^{2}$. 
Let $k := \frac{r + s + 1}{2}$ and $l := \frac{r - s - 1}{2}$. 
Then we have 
\begin{align*}
2 p = (4 r + 4 s + 1)^{2} + (4 r - 4 s - 1)^{2} = (8 k - 3)^{2} + (8 l + 3)^{2}. 
\end{align*}
Therefore, from 
\begin{align*}
&D_{4 \times 2 \times 2}(k - m, l - m + 1, - k - m + 1, - l - m, k + m, l + m + 1, - k + m + 1, - l + m, \\ 
&\qquad k - m, l - m + 1, - k - m + 1, - l - m, k + m - 1, l + m, - k + m - 1, - l + m) \\
&\quad= D_{4}(4 k - 1, 4 l + 3, 2 - 4 k, - 4 l) D_{4}(1 - 4 m, 1 - 4 m, 2 - 4 m, - 4 m) \\ 
&\qquad \times D_{4}(1, 1, 2, 0) D_{4}(- 1, - 1, - 2, 0) \\ 
&\quad= - 2^{3} \left\{ (8 k - 3)^{2} + (8 l + 3)^{2} \right\} \cdot 2 \left\{ (3 - 8 m)^{2} - (1 - 8 m)^{2} \right\} \cdot 2^{4} \cdot 2^{4} \\ 
&\quad= - 2^{12} \left\{ (8 k - 3)^{2} + (8 l + 3)^{2} \right\} (- 32 m + 8) \\ 
&\quad= 2^{15} \left\{ (8 k - 3)^{2} + (8 l + 3)^{2} \right\} (4 m - 1), 
\end{align*}
we have $2^{16} p (4 m - 1) \in S({\rm C}_{4} \times {\rm C}_{2}^{2})$. 
\end{proof}

\begin{proof}[Proof of Lemma~$\ref{lem:5.3}$]
For any $p \in P$, 
there exist $r, s \in \mathbb{Z}$ with $r \equiv s \pmod{2}$ satisfying $p = (4 r + 2)^{2} + (4 s + 1)^{2}$. 
Let $k := \frac{r + s}{2}$ and $l := \frac{r - s}{2}$. 
Then we have 
\begin{align*}
2 p = (4 r + 4 s + 3)^{2} + (4 r - 4 s + 1)^{2} = (8 k + 3)^{2} + (8 l + 1)^{2}. 
\end{align*}
Therefore, from 
\begin{align*}
&D_{4 \times 2 \times 2}(m + l + 1, m + k + 1, m - l + 1, m - k, m + l + 1, m + k + 1, m - l + 1, m - k, \\ 
&\qquad m + l + 1, m + k + 1, m - l + 1, m - k, m + l, m + k, m - l - 1, m - k) \\ 
&\quad= D_{4}(4 m + 4 l + 3, 4 m + 4 k + 3, 4 m - 4 l + 2, 4 m - 4 k) D_{4}(1, 1, 2, 0)^{2} D_{4}(- 1, - 1, - 2, 0) \\ 
&\quad= \left\{ (8 m + 5)^{2} - (8 m + 3)^{2} \right\} \left\{ (8 l + 1)^{2} + (8 k + 3)^{2} \right\} \cdot (2^{4})^{2} \cdot 2^{4} \\ 
&\quad= 2^{12} (32 m + 16) \left\{ (8 k + 3)^{2} + (8 l + 1)^{2} \right\} \\ 
&\quad= 2^{16} \left\{ (8 k + 3)^{2} + (8 l + 1)^{2} \right\} (2 m + 1), 
\end{align*}
we have $2^{17} p (2 m + 1) \in S({\rm C}_{4} \times {\rm C}_{2}^{2})$. 
\end{proof}

From Lemmas~$\ref{lem:3.1}$, $\ref{lem:4.1}$--$\ref{lem:4.3}$ and $\ref{lem:5.1}$--$\ref{lem:5.3}$, 
Theorem~$\ref{thm:1.1}$ is proved.

\clearpage

\bibliography{reference}
\bibliographystyle{plain}

\medskip
\begin{flushleft}
Faculty of Education, 
University of Miyazaki, 
1-1 Gakuen Kibanadai-nishi, 
Miyazaki 889-2192, 
Japan \\ 
{\it Email address}, Yuka Yamaguchi: y-yamaguchi@cc.miyazaki-u.ac.jp \\ 
{\it Email address}, Naoya Yamaguchi: n-yamaguchi@cc.miyazaki-u.ac.jp 
\end{flushleft}

\end{document}